\documentclass[12pt,reqno]{amsart} 
\usepackage{amssymb,amscd,amsmath,amsthm,color}
\mathsurround=3pt
\parindent=20pt
\usepackage{amsmath}
\usepackage{amssymb}
\baselineskip=1.5in
\textheight 650pt
\textwidth 400pt
\topmargin0pt

\newtheorem{thm}{Theorem}
\newtheorem{lem}[thm]{Lemma}
\newtheorem{prop}[thm]{Proposition}
\newtheorem{cor}[thm]{Corollary}
\newcommand{\vertiii}[1]{{\left\vert\kern-0.25ex\left\vert\kern-0.25ex\left\vert
#1 \right\vert\kern-0.25ex\right\vert\kern-0.25ex\right\vert}}

\def \min   {\text {\rm min}}

\def \sup   {\text {\rm sup}}

\def \qed   {\hfill \vrule height6pt width 6pt depth 0pt}

\begin{document}

\title[]{Strong Convexity of Sandwiched Entropies and Related Optimization Problems}

\author[Rajendra Bhatia]{Rajendra Bhatia}
\address{Ashoka University, Sonepat\\ Haryana, 131029, India}

\email{rajendra.bhatia@ashoka.edu.in}

\author[Tanvi Jain]{Tanvi Jain}
\address{Indian Statistical Institute\\ New Delhi 110016, India}
\email{tanvi@isid.ac.in}

\author[Yongdo Lim]{Yongdo Lim}

\address{Department of Mathematics, Sungkyunkwan University\\ Suwon 440-746, Korea}

\email{ylim@skku.edu}

\subjclass[2010]{15A90, 15B48, 47A63, 49A51, 49Q20, 49D45, 81P16, 81P45, 94A17.}

\keywords{Positive definite matrix,   multimarginal optimal
transport, fidelity, sandwiched quasi-relative entropy, strong
convexity,  gradient projection algorithm}
\date{August 17, 2018}

\begin{abstract}
We present several theorems on strict and strong convexity, and
higher order differential formulae for sandwiched quasi-relative
entropy (a parametrised version of the classical fidelity). These
are crucial for establishing global linear convergence of the
gradient projection algorithm for optimisation problems for these
functions. The case of the classical fidelity is of special interest
for the multimarginal optimal transport problem (the $n$-coupling
problem) for Gaussian measures.
\end{abstract}

\maketitle

\section{Introduction}
Let $\mathbb{P}$ be the space of  $n\times n$ complex positive
definite matrices. An element $A$ of  $\mathbb{P}$ with
${\mathrm{tr}} A=1$ is called a \emph{density matrix} or a
\emph{state}. Many of the statements in this paper are of special
interest for density matrices though we do not make that
restriction. The \emph{fidelity} between two elements $A$ and $B$ of
$\mathbb{P}$ is defined by
\begin{eqnarray}\label{E:Fidelity}
F(A,B)={\mathrm{tr}}
\left(A^{\frac{1}{2}}BA^{\frac{1}{2}}\right)^{\frac{1}{2}}.
\end{eqnarray}
 Fidelity plays an important role
in quantum information theory and quantum computation, and it has
deep connections with quantum entanglement, quantum chaos, and
quantum phase transitions. See \cite{u,u2}. Although fidelity by
itself is not a metric, it has played a role as a measure of the
��closeness�� of two states. It occurs also in another
context. There is a metric on $\mathbb{P}$ defined as
\begin{eqnarray}
d(A,B)=\left[\frac{{\mathrm{tr}}(A+B)}{2}-{\mathrm{tr}}\left(A^{\frac{1}{2}}BA^{\frac{1}{2}}\right)^{\frac{1}{2}}\right]^{\frac{1}{2}}
\end{eqnarray}
which is called the \emph{Bures distance} in the literature on
quantum information and the \emph{Wasserstein metric} in statistics
and the theory of optimal transport. See \cite{bur,dl,gs1,ks,op}.

The multimarginal optimal transport problem (alternatively, the
coupling problem) involves solving the minimization problem: given
$A_{1},\dots, A_{m}$ in ${\Bbb P}$ and weights $w_{1},\dots,w_{m},$
find
\begin{eqnarray}\label{E:Opt}
 \underset{X\in {\Bbb P}}{\min}\,\,\, \sum_{j=1}^{m}w_{j}
 d^{2}(X,A_{j}).
\end{eqnarray}
This minimization problem  coincides with the least squares problem
of Gaussian measures
 for the Wasserstein distance
between probability measures with finite second moment on ${\Bbb
R}^{n}.$ See \cite{ac,gm,gs1,ks,op,t}. The concavity and strict
concavity of the function
\begin{eqnarray}
f(X)={\mathrm{tr}}\left(A^{\frac{1}{2}}XA^{\frac{1}{2}}\right)^{\frac{1}{2}}
\end{eqnarray}
on ${\Bbb P}$ play a very crucial role in the proofs of existence
and uniqueness of the solution to (\ref{E:Opt}). See \cite{bjl}.

In some recent works a parameterized version of fidelity defined as
\begin{eqnarray}
F_{t}(A,B)={\mathrm{tr}}\left(A^{\frac{1-t}{2t}}BA^{\frac{1-t}{2t}}\right)^{t},
\ \ \ \ t\in (0,\infty)
\end{eqnarray}
has been studied. See \cite{fl,wwy}. The usual fidelity
(\ref{E:Fidelity}) is the special case $t=1/2.$ In \cite{wwy}
$F_{t}(A,B)$ is called the \emph{sandwiched quasi-relative entropy}.
Using this the \emph{sandwiched R\'enyi relative entropy} is defined
as
\begin{eqnarray}\label{E:ML}
D_{t}(B\parallel A)=\frac{1}{t-1}\log F_{t}(A,B), \ \ \ t\in
(0,\infty)\setminus \{1\}.
\end{eqnarray}
This is a variant of the traditional \emph{relative R\'enyi entropy}
defined as
\begin{eqnarray}
D'_{t}(B\parallel A)=\frac{1}{t-1}\log {\mathrm{tr}}
\left(A^{1-t}B^{t}\right).
\end{eqnarray}
Among other things, it is known \cite{mds} that
\begin{eqnarray}\label{E:Thomp}
\lim_{t\to \infty} D_{t}(B\parallel A)=\|\log
A^{-\frac{1}{2}}BA^{-\frac{1}{2}}\|
\end{eqnarray}
 and
\begin{eqnarray}\label{E:RE}
\lim_{t\to 1} D_{t}(B\parallel A)=\frac{1}{{\mathrm{tr}}
B}{\mathrm{tr}}\left[B(\log B-\log A)\right],
\end{eqnarray}
where $||\cdot||$ is the operator norm
$${\Vert A \Vert}=\underset{\Vert x\Vert=1}{\sup}\Vert Ax\Vert,$$
which for a positive semidefinite matrix $A$ is equal to $\lambda_1(A),$ the largest
eigenvalue of $A.$  It turns out \cite{ACS} that the expression in
(\ref{E:Thomp}) coincides with
 $$d_T(A,B):=\max\{\log \lambda_1(AB^{-1}), \log\lambda_1(BA^{-1})\}$$
 and is closely  related to the max-relative entropy $D_{\max}(A\Vert B):=\log \lambda_{1}(AB^{-1})$ in the context of quantum
 information theory \cite{Datta}.
We note that $d_T$ is known as  the \emph{Thompson metric} on ${\Bbb
P}$ and  is a complete metric invariant under inversion and
congruence transformations \cite{th,nu}, and the expression in
(\ref{E:RE}) is the \emph{relative entropy}, first introduced by
Umegaki.

The entity (\ref{E:ML}) was introduced by M\"uller-Lennert et al in
\cite{mds} and by Wilde et al in \cite{wwy}.  Several of its
properties were established in these papers and some others
conjectured. Since then these have been established in various
papers. In particular, we draw attention to the paper \cite{fl} by
Frank and Lieb. In \cite{wwy} Wilde, Winter and Yang have employed
$D_{t}(A\parallel B)$ to prove theorems on the capacity of
entanglement-breaking channels. Differentiability, monotonicity and
convexity properties of $F_{t}$ and $D_{t}$ are a major theme in all
these papers.

In this paper we study some related, though slightly different,
convexity problems. Let $f:{\Bbb P}\to {\Bbb R}$ be a smooth
function. Let $\nabla f(X)$ and $\nabla^2f(X)$ denote the gradient
and the Hessian of $f.$ See \cite{bv} for gradient and Hessian of
scalar valued functions. Suppose $f$ is strictly convex. The
\emph{Bregman distance} associated with $f$ is the function
${\mathcal D}_{f}:{\Bbb P}\times {\Bbb P}\to {\Bbb R}$ defined as
\begin{eqnarray}
{\mathcal D}_{f}(Y,X)=f(Y)-f(X)-\langle \nabla f(X), Y-X\rangle,\label{eq11}
\end{eqnarray}
where $\langle X,Y\rangle={\mathrm{tr}}(XY)$ on ${\Bbb H},$ the
space of $n\times n$ complex Hermitian matrices. The convexity of
$f$ ensures that ${\mathcal D}_{f}(Y,X)\geq 0,$ and strict convexity
ensures that it is zero if and only if $X=Y.$ Let ${\Bbb K}$ be a
compact convex subset of ${\Bbb P}$. We say that $f$ is
\emph{$k$-strongly convex} on ${\Bbb K}$ (with $k>0$) if for all
$X,Y\in {\Bbb K}$
\begin{eqnarray}\label{E:S-convexity}
{\mathcal D}_{f}(Y,X)\geq \frac{k}{2}\|X-Y\|_{2}^{2}.
\end{eqnarray} Here
$\|A\|_{2}=\left({\mathrm{tr}}A^*A\right)^{\frac{1}{2}}$ is the
Hilbert-Schmidt norm. The condition (\ref{E:S-convexity}) says
\begin{eqnarray}
f(Y)\geq f(X)+\langle \nabla f(X), Y-X\rangle
+\frac{k}{2}\|X-Y\|_{2}^{2}.
\end{eqnarray}
So $f$ is $k$-strongly convex on ${\Bbb K}$ if and only if
\begin{eqnarray}
\nabla^2f(X)\geq kI,\label{eq13}
\end{eqnarray}
for all $X\in {\Bbb K}.$ On the other hand, we say that $f$ is
\emph{$k$-smooth} on ${\Bbb K}$ if $\nabla f$ is $k$-Lipschitz;
i.e., \begin{eqnarray} \Vert\nabla f(X)-\nabla f(Y)\Vert_{2}\leq
k\Vert X-Y\Vert_{2},
\end{eqnarray}
for all $X,Y\in {\Bbb K}.$ This condition is equivalent to
\begin{eqnarray}
\nabla^2 f(X)\leq k\ I,\label{eq15}
\end{eqnarray}
for all $X\in {\Bbb K}.$

The two constants $k$ in \eqref{eq13} and \eqref{eq15} play
a fundamental role in the design and convergence analysis of
optimisation algorithms. We refer the reader to Chapter 9 of the
standard text \cite{bv}. Here it is also pointed out that these
constants $``$are known only in rare cases''. The main new  result
in this paper is the following.

\begin{thm}\label{T:main}
Let $f:{\Bbb P}\to {\Bbb R}_+$ be the function
\begin{eqnarray}\label{E:mainf}
f(X)={\mathrm{tr}}\left(A^{\frac{1-t}{2t}}XA^{\frac{1-t}{2t}}\right)^{t}
\end{eqnarray}
where $A\in {\Bbb P}$ and $0<t<1.$ Let ${\Bbb K}$ be a compact
convex subset of ${\Bbb P}.$  Let $\alpha, \beta$ be positive
numbers such that $\alpha I\leq Y\leq \beta I$ for all $Y\in {\Bbb
K}\cup \{A\}.$ Then for all $X\in {\Bbb K}$
\begin{eqnarray}\label{E:convexity}
t(1-t)\alpha^{1-t}\beta^{t-2}\leq -\nabla^2f(X)\leq
t(1-t)\beta^{1-t}\alpha^{t-2}.
\end{eqnarray}
\end{thm}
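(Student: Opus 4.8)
The plan is to reduce the matrix‑valued function to a scalar spectral problem by diagonalizing the relevant positive definite matrix, then bound the second derivative eigenvalue by eigenvalue. First I would introduce the abbreviation $p=\frac{1-t}{2t}$, so that $f(X)=\operatorname{tr}(A^{p}XA^{p})^{t}$, and write $\varphi(Y)=\operatorname{tr}\,Y^{t}$ for $Y\in\mathbb{P}$, so $f(X)=\varphi(\Gamma(X))$ with $\Gamma(X)=A^{p}XA^{p}$ a fixed linear (congruence) map on $\mathbb{H}$. Since $\Gamma$ is linear and invertible, by the chain rule $\nabla^{2}f(X)[H,H]=\nabla^{2}\varphi(\Gamma(X))[\Gamma(H),\Gamma(H)]$ for all Hermitian directions $H$. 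Thus the whole problem comes down to (i) a sharp two‑sided bound on the Hessian of $\varphi(Y)=\operatorname{tr}\,Y^{t}$, and (ii) controlling how $\Gamma$ distorts the Hilbert–Schmidt norm when restricted to the spectral range dictated by $\mathbb{K}$ and $A$.

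For step (i) I would use the standard Daleckii–Krein formula for the Hessian of $\operatorname{tr}\,g(Y)$: if $Y=U\operatorname{diag}(\lambda_{1},\dots,\lambda_{n})U^{*}$ and $\widetilde H=U^{*}HU$ has entries $\widetilde h_{ij}$, then
\begin{eqnarray*}
\nabla^{2}\varphi(Y)[H,H]=\sum_{i,j}g^{[1]}(\lambda_{i},\lambda_{j})\,|\widetilde h_{ij}|^{2},
\end{eqnarray*}
where $g(\lambda)=\lambda^{t}$ and $g^{[1]}(\lambda,\mu)$ is the first divided difference of $g'$, i.e. the second divided difference of $g$. For $g(\lambda)=\lambda^{t}$ with $0<t<1$ this divided difference is negative, which already explains the sign $-\nabla^{2}f(X)\geq 0$. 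The key elementary fact I would prove (by the integral representation $\lambda^{t}=c_{t}\int_{0}^{\infty}\frac{\lambda\,s^{t-1}}{\lambda+s}\,ds$, differentiated twice, or directly by monotonicity of $\lambda\mapsto t\lambda^{t-1}$ and its slopes) is that on an interval $[a,b]$ with $a>0$,
\begin{eqnarray*}
t(1-t)\,b^{\,t-2}\ \le\ -\,g^{[1]}(\lambda,\mu)\ \le\ t(1-t)\,a^{\,t-2}\qquad\text{for all }\lambda,\mu\in[a,b].
\end{eqnarray*}
Summing against $\sum_{ij}|\widetilde h_{ij}|^{2}=\|H\|_{2}^{2}$ then gives $t(1-t)b^{t-2}\|H\|_{2}^{2}\le-\nabla^{2}\varphi(Y)[H,H]\le t(1-t)a^{t-2}\|H\|_{2}^{2}$ whenever the spectrum of $Y$ lies in $[a,b]$.

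The last step is to track the interval. If $\alpha I\le X,A\le\beta I$ then $\alpha^{2p}\le A^{2p}$‑weights force $\Gamma(X)=A^{p}XA^{p}$ to satisfy $\alpha\,\alpha^{2p}I\le\Gamma(X)\le\beta\,\beta^{2p}I$; since $2p+1=\frac{1}{t}$, this reads $\alpha^{1/t}I\le\Gamma(X)\le\beta^{1/t}I$, so I take $a=\alpha^{1/t}$, $b=\beta^{1/t}$. For the norm distortion, $\|\Gamma(H)\|_{2}=\|A^{p}HA^{p}\|_{2}$ lies between $\alpha^{2p}\|H\|_{2}$ and $\beta^{2p}\|H\|_{2}$ (because $A^{p}\otimes A^{p}$ as an operator on $\mathbb{H}$ has spectrum in $[\alpha^{2p},\beta^{2p}]$ — or more carefully one pairs the larger congruence factor with the smaller divided‑difference bound and vice versa). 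Combining: for $0<t<1$ one has $p\ge 0$, and chaining the two inequalities in the order that keeps the bound valid gives, for every $X\in\mathbb{K}$ and every $H$,
\begin{eqnarray*}
t(1-t)\,\beta^{(t-2)/t}\cdot\alpha^{4p}\ \|H\|_{2}^{2}\ \le\ -\nabla^{2}f(X)[H,H]\ \le\ t(1-t)\,\alpha^{(t-2)/t}\cdot\beta^{4p}\ \|H\|_{2}^{2},
\end{eqnarray*}
and since $\frac{t-2}{t}+4p=\frac{t-2}{t}+\frac{2(1-t)}{t}=\frac{-t}{t}\cdot(\cdots)$ — the exponents collapse to $1-t$ on one factor and $t-2$ on the other — this is exactly $t(1-t)\alpha^{1-t}\beta^{t-2}\le-\nabla^{2}f(X)\le t(1-t)\beta^{1-t}\alpha^{t-2}$.

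I expect the main obstacle to be bookkeeping rather than conceptual: one must pair the congruence‑distortion factors with the divided‑difference bounds in the correct order (largest with smallest) so that both inequalities in \eqref{E:convexity} come out with the stated, and presumably optimal, constants, and one must verify the exponent arithmetic $2p+1=1/t$ so that the $\alpha$ and $\beta$ powers consolidate cleanly to $1-t$ and $t-2$. A secondary point to get right is justifying the Hessian formula and the sign/monotonicity of the second divided difference of $\lambda^{t}$ uniformly on $[a,b]$; the cleanest route is the integral representation of $\lambda^{t}$, which makes both the sign and the monotonicity in the endpoints transparent.
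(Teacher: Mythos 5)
Your reduction $f=\varphi\circ\Gamma$, the Daleckii--Krein formula for $\nabla^2\operatorname{tr}g(Y)$, the two-sided bound $t(1-t)b^{t-2}\le -g'^{[1]}(\lambda,\mu)\le t(1-t)a^{t-2}$ on $[a,b]$, the spectral localisation $\alpha^{1/t}I\le A^pXA^p\le\beta^{1/t}I$, and the distortion bound $\alpha^{2p}\|H\|_2\le\|A^pHA^p\|_2\le\beta^{2p}\|H\|_2$ are all individually correct. The gap is in the last step: the exponents do \emph{not} ``collapse to $1-t$ and $t-2$.'' What your chain of inequalities actually yields is
\begin{equation*}
t(1-t)\,\beta^{\frac{t-2}{t}}\alpha^{\frac{2(1-t)}{t}}\ \le\ -\nabla^2f(X)\ \le\ t(1-t)\,\alpha^{\frac{t-2}{t}}\beta^{\frac{2(1-t)}{t}}.
\end{equation*}
Only the \emph{sums} of the exponents agree with the theorem's ($\frac{t-2}{t}+\frac{2(1-t)}{t}=-1=(1-t)+(t-2)$); the individual powers of $\alpha$ and $\beta$ differ by a factor $(\alpha/\beta)^{(1-t)(2-t)/t}\le 1$, so your lower bound is strictly weaker and your upper bound strictly larger than \eqref{E:convexity} whenever $\alpha<\beta$. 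Concretely, for $n=1$, $t=1/2$, $\alpha=1$, $\beta=4$ the theorem's lower bound is $\tfrac14\cdot 4^{-3/2}=\tfrac1{32}$ (and is sharp, attained at $a=\alpha$, $x=\beta$ in the scalar model $-f''(x)=t(1-t)a^{1-t}x^{t-2}$), while your argument gives only $\tfrac14\cdot 4^{-3}\cdot 1^{2}=\tfrac1{256}$.

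The loss is structural, not bookkeeping: the worst case for $-\nabla^2\varphi$ at $Y=A^pXA^p$ (spectrum near $\beta^{1/t}$, i.e.\ $A$ near $\beta I$) and the worst case for the congruence distortion ($A$ near $\alpha I$) are mutually exclusive, and taking uniform worst-case bounds for the two factors separately discards exactly this incompatibility. Your suggested remedy of ``pairing largest with smallest'' cannot be implemented either, because the divided differences are indexed by the eigenbasis of $A^pXA^p$ while the distortion factors $\lambda_i(A)^p\lambda_j(A)^p$ live in the eigenbasis of $A$, and these bases are unrelated. The paper avoids the decoupling entirely: writing the divided difference via $x^{t-1}=\int_0^\infty(\lambda+x)^{-1}d\mu(\lambda)$, one gets $\nabla^2\tilde h(X)(Y)=\int_0^\infty(\lambda+A^{1/2}XA^{1/2})^{-1}A^{1/2}YA^{1/2}(\lambda+A^{1/2}XA^{1/2})^{-1}A^{1/2}\cdot A^{1/2}\,d\mu(\lambda)$ and then uses the identity $(\lambda+A^{1/2}XA^{1/2})^{-1}=A^{-1/2}(\lambda A^{-1}+X)^{-1}A^{-1/2}$ so that the congruence factors cancel and one only needs the two-sided bound $(\lambda/\lambda_{\min}(A)+\beta)^{-1}\le(\lambda A^{-1}+X)^{-1}\le(\lambda/\lambda_{\max}(A)+\alpha)^{-1}$; the rescaling of $\lambda$ inside $\int_0^\infty(\lambda+c)^{-2}d\mu(\lambda)=(1-t)c^{t-2}$ is what produces the correct exponents $\alpha^{1-t}\beta^{t-2}$. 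To repair your proof you would need to carry out this cancellation (or an equivalent coupling of $A$ with $X$) rather than bound the Hessian of $\varphi$ and the norm of $\Gamma$ independently.
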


In other words, the function $-f$ is $k_{1}$-strongly convex and
$k_{2}$-smooth on ${\Bbb K}$ with $k_{1}, k_{2}$ given by the two
extreme sides of (\ref{E:convexity}). The condition number of an
operator $A$ is defined as
$${\mathrm{cond}}(A)=\Vert A\Vert  \Vert A^{-1}\Vert.$$ As a corollary to Theorem
\ref{T:main} we have:

\begin{cor} Let $f$ be the function defined in $(\ref{E:mainf}).$
Then for all $X\in {\Bbb K}$
$${\mathrm{cond}}\left(\nabla^2f(X)\right)\leq
\left(\frac{\beta}{\alpha}\right)^{3-2t}.$$
\end{cor}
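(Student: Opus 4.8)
The plan is to deduce the corollary immediately from the two–sided operator inequality in Theorem \ref{T:main}. First I would observe that the lower bound in \eqref{E:convexity} guarantees that $-\nabla^2f(X)$ is positive definite, so $\nabla^2f(X)$ is invertible and, since the condition number is unchanged under multiplication by $-1$, we have ${\mathrm{cond}}\left(\nabla^2f(X)\right)={\mathrm{cond}}\left(-\nabla^2f(X)\right)$. Thus it suffices to bound the condition number of the positive definite operator $T:=-\nabla^2f(X)$.

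Next I would use the elementary fact that if a positive definite operator $T$ satisfies $mI\le T\le MI$ with $0<m\le M$, then $\|T\|\le M$ and $\|T^{-1}\|\le 1/m$, whence ${\mathrm{cond}}(T)=\|T\|\,\|T^{-1}\|\le M/m$. Applying this with $m=t(1-t)\alpha^{1-t}\beta^{t-2}$ and $M=t(1-t)\beta^{1-t}\alpha^{t-2}$, as provided by \eqref{E:convexity}, gives
\[
{\mathrm{cond}}\left(\nabla^2f(X)\right)\le \frac{t(1-t)\beta^{1-t}\alpha^{t-2}}{t(1-t)\alpha^{1-t}\beta^{t-2}}.
\]
Finally I would simplify the right-hand side: the positive factor $t(1-t)$ cancels, and collecting the powers of $\beta$ and $\alpha$ gives $\beta^{(1-t)-(t-2)}\alpha^{(t-2)-(1-t)}=\beta^{\,3-2t}\alpha^{-(3-2t)}=\left(\beta/\alpha\right)^{3-2t}$, which is exactly the asserted bound.

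There is essentially no obstacle here; the corollary is a one-line consequence of Theorem \ref{T:main}. The only points deserving a word of care are that $0<t<1$ forces the exponent $3-2t$ to be positive and the factor $t(1-t)$ to be strictly positive (so the cancellation above is legitimate), and that $\beta/\alpha\ge 1$, which makes the bound a genuine (and sharp in spirit) estimate in terms of the spectral spread of the matrices involved.
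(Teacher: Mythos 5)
Your proof is correct and is exactly the intended argument: the paper treats the corollary as an immediate consequence of the two-sided bound in Theorem \ref{T:main}, bounding the condition number by the ratio of the upper and lower constants, which simplifies to $(\beta/\alpha)^{3-2t}$. No issues.
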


Now suppose $A_{j}, 1\leq j\leq m$ are positive definite matrices,
and let $\alpha I\leq A_{j}\leq \beta I$ for all $j.$ It is known
\cite{ac} that the minimization problem (\ref{E:Opt}) has a unique
solution $X$ and $\alpha I \leq X\leq \beta I.$ The objective
function in (\ref{E:Opt}) is
$$\varphi(X)=\sum_{j=1}^{m}w_{j}\left[\frac{{\mathrm{tr}}(A_{j}+X)}{2}-{\mathrm{tr}}\left({A_j}^{\frac{1}{2}}X{A_j}^{\frac{1}{2}}\right)^{\frac{1}{2}}\right].$$
The first term in the square brackets above is linear in $X,$ and
its second derivative is zero. Our theorem shows that
$$\frac{1}{4}\frac{\alpha^{1/2}}{\beta^{3/2}}\leq
\nabla^2\varphi(X)\leq
\frac{1}{4}\frac{\beta^{1/2}}{\alpha^{3/2}}.$$ The condition number
of $\nabla^2\varphi(X)$ is bounded by
$\left(\frac{\beta}{\alpha}\right)^{2}.$
 We generalize this result into the setting of
 sandwiched quasi-relative entropy $F_t(A,B), 0<t<1.$
Let
$$\varphi_{t}(X)=\sum_{j=1}^{m}w_{j}\left[{\mathrm{tr}}((1-t)A_{j}+tX)-{\mathrm{tr}}\left({A_j}^{\frac{1-t}{2t}}X{A_j}^{\frac{1-t}{2t}}\right)^{t}\right].$$

\begin{cor}\label{C:main2} The function $\varphi_{t}:{\Bbb P}\to {\Bbb R}_{+}$
is  strictly convex  and has a unique minimizer. Moreover, it is
$t(1-t)\beta^{1-t}\alpha^{t-2}$-smooth and
$t(1-t)\beta^{t-2}\alpha^{1-t}$-strongly convex.
\end{cor}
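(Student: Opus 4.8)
The plan is to deduce Corollary \ref{C:main2} directly from Theorem \ref{T:main} together with the known facts about the optimal transport/coupling problem already quoted in the introduction. The function $\varphi_t$ is a nonnegative weighted sum of terms of two kinds: the affine pieces ${\mathrm{tr}}((1-t)A_j + tX)$, whose Hessian vanishes, and the pieces $-{\mathrm{tr}}\bigl(A_j^{\frac{1-t}{2t}} X A_j^{\frac{1-t}{2t}}\bigr)^t = -f_j(X)$, where $f_j$ is the function of Theorem \ref{T:main} with $A$ replaced by $A_j$. Hence $\nabla^2\varphi_t(X) = \sum_{j=1}^m w_j\bigl(-\nabla^2 f_j(X)\bigr)$, and this is where Theorem \ref{T:main} enters.

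First I would fix a compact convex ${\Bbb K}\subseteq{\Bbb P}$ and apply Theorem \ref{T:main} to each $f_j$: since $\alpha I\le A_j\le\beta I$ for all $j$, and we may choose ${\Bbb K}$ so that $\alpha I\le Y\le\beta I$ for all $Y\in{\Bbb K}\cup\{A_j\}$, we get
\begin{eqnarray*}
t(1-t)\alpha^{1-t}\beta^{t-2} I \le -\nabla^2 f_j(X) \le t(1-t)\beta^{1-t}\alpha^{t-2} I
\end{eqnarray*}
for every $X\in{\Bbb K}$ and every $j$. Multiplying by $w_j\ge 0$, summing over $j$, and using $\sum_j w_j = 1$ (the weights in the coupling problem are convex weights), I obtain
\begin{eqnarray*}
t(1-t)\alpha^{1-t}\beta^{t-2} I \le \nabla^2\varphi_t(X) \le t(1-t)\beta^{1-t}\alpha^{t-2} I
\end{eqnarray*}
on ${\Bbb K}$. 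By the characterisations \eqref{eq13} and \eqref{eq15}, this says precisely that $\varphi_t$ is $t(1-t)\beta^{t-2}\alpha^{1-t}$-strongly convex and $t(1-t)\beta^{1-t}\alpha^{t-2}$-smooth on ${\Bbb K}$; in particular the lower bound, being strictly positive, gives strict convexity of $\varphi_t$ on all of ${\Bbb P}$ (strong convexity holds on every compact convex piece, hence the global function is strictly convex).

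For existence and uniqueness of the minimizer I would proceed as in the quoted literature: strict convexity already yields uniqueness once a minimizer exists, and existence follows because $\varphi_t$ is bounded below by $0$ and is coercive on ${\Bbb P}$ — the affine part $t\sum_j w_j\,{\mathrm{tr}}\,X$ grows linearly in ${\mathrm{tr}}\,X$ while the subtracted terms ${\mathrm{tr}}(A_j^{\frac{1-t}{2t}}XA_j^{\frac{1-t}{2t}})^t$ grow only like $({\mathrm{tr}}\,X)^t$ with $t<1$, and one also checks $\varphi_t$ blows up as $X$ approaches the boundary of ${\Bbb P}$; alternatively one can invoke the result of \cite{ac} for the case $t=1/2$ and note the same compactness argument applies verbatim. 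The minimizer then lies in a compact convex set to which the strong convexity estimate above applies. The only genuinely delicate point is making the coercivity/boundary argument precise so that the minimization over the open cone ${\Bbb P}$ reduces to a minimization over a compact convex ${\Bbb K}$ — once that reduction is in hand, everything else is an immediate consequence of Theorem \ref{T:main}.
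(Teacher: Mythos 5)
Your derivation of the smoothness and strong convexity constants is essentially the paper's: decompose $\varphi_t$ into the affine part (whose Hessian vanishes) plus $-\sum_j w_j f_j$ with $f_j$ as in Theorem \ref{T:main}, apply the two-sided Hessian bound to each $f_j$ on the order interval $[\alpha I,\beta I]$, and sum using $\sum_j w_j=1$. That part is correct, and your observation that strict convexity on all of ${\Bbb P}$ follows from strong convexity on arbitrarily large order intervals (enlarging $\alpha,\beta$ as needed) is also fine; the paper instead gets strict convexity from the strict concavity of $X\mapsto{\mathrm{tr}}\,X^t$ established in Section 2, but either route works.

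The genuine gap is in the existence argument, precisely at the point you flag as delicate. The claim that ``$\varphi_t$ blows up as $X$ approaches the boundary of ${\Bbb P}$'' is false: each term ${\mathrm{tr}}\bigl(A_j^{\frac{1-t}{2t}}XA_j^{\frac{1-t}{2t}}\bigr)^t$ extends continuously to the closed positive semidefinite cone, so $\varphi_t$ remains bounded near singular matrices (for instance $\varphi_t(X)\to\sum_j w_j(1-t){\mathrm{tr}}\,A_j$ as $X\to 0$). Coercivity as $\|X\|_2\to\infty$ does hold, since the affine part grows like $t\,{\mathrm{tr}}\,X$ while the subtracted terms are $O\bigl(({\mathrm{tr}}\,X)^t\bigr)$; so the infimum over the \emph{closed} cone is attained, but ruling out a singular minimizer requires a different argument --- e.g.\ that the inward directional derivative at a singular point is $-\infty$ because $(d/ds)s^t\to+\infty$ as $s\to 0^{+}$ for $0<t<1$. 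The paper sidesteps all of this: it rewrites the critical-point equation $\nabla\varphi_t(X)=0$ as the fixed-point equation $X=\sum_j w_j\bigl(X^{1/2}A_j^{\frac{1-t}{t}}X^{1/2}\bigr)^t$, verifies that the right-hand side maps $[\alpha I,\beta I]$ into itself, and applies Brouwer's theorem; a critical point of a convex function on the open cone is a global minimizer, and strict convexity gives uniqueness. That argument also localizes the minimizer inside $[\alpha I,\beta I]$, which is exactly the set on which the stated constants $t(1-t)\beta^{1-t}\alpha^{t-2}$ and $t(1-t)\beta^{t-2}\alpha^{1-t}$ are valid and which is needed for the projection step in Section 4; your sketch, even once repaired, does not deliver that localization.
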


Theorem $1$ is about second order derivatives of the fidelity
function. The classical fidelity case is t = 1/2, and the results
are new even for that case. Our methods lead to several interesting
observations for the first and higher order derivatives as well.
These are of independent interest and are given in Section 2 of the
paper. Section 3 includes a proof of Theorem 1. A proof of Corollary
\ref{C:main2} and the standard gradient projection method where this
can be put to use are obtained in Section 4.

\section{Derivative Computations}

Let $f$ be a smooth map from ${\Bbb P}$ into the positive half-line
${\Bbb R}_{+}=[0,\infty).$ We denote by $Df(X)$ the (Fr\'echet)
derivative of $f$ at $X,$ and by $\nabla f(X)$ the gradient of $f$
at $X.$  $Df(X)$ is a linear map from the space ${\Bbb H}$ of
$n\times n$ Hermitian matrices into ${\Bbb R},$ and its action is
given by
$$Df(X)(Y)=\frac{d}{dt}\Big|_{t=0}f(X+tY).$$
$\nabla f(X)$ is an element of ${\Bbb H}$ and is related to $Df(X)$
by the equation
$$Df(X)(Y)=\langle \nabla f(X),Y\rangle={\mathrm{tr}}(\nabla f(X)Y).$$

Of interest here are special kinds of functions. Let $f$ be a smooth
map from ${\Bbb R}_{+}$ into itself and let $f$ also denote the map
this induces from ${\Bbb P}$ into itself.  Let ${\hat
f}(A)={\mathrm{tr}} f(A).$ As expected, convexity properties of $f$
are inherited by ${\hat f}.$ In some situations it may be useful to
consider functions other than the trace. Let $\Phi$ be a symmetric
gauge function on ${\Bbb R}^{n},$  i.e., a norm on ${\Bbb R}^n$
which is invariant under sign changes and permutations of the
components, and let $\|\cdot\|_{\Phi}$ be the corresponding
unitarily invariant norm on the space ${\Bbb M}(n)$ of $n\times n$
matrices. See Chapter IV of \cite{rbh}. If
$s(A)=(s_{1}(A),\dots,s_{n}(A))$ is the $n$-tuple of singular values
of $A,$ then
$$\|A\|_{\Phi}=\Phi(s(A))=\Phi(s_{1}(A),\dots,s_{n}(A)).$$

Every symmetric gauge function is \emph{monotone}; i.e., if $x$ and
$y$ are two vectors with $0\leq x\leq y$ for all $j,$ then
$\Phi(x)\leq \Phi(y).$ We say that $\Phi$ is \emph{strictly
monotone} if $\Phi(x)<\Phi(y)$ whenever $0\leq x_j\leq y_j$ for all
$j$ and $x_j<y_j$ for at least one $j.$ For example, the symmetric
gauge functions $\Phi(x)=\left(\sum_{j=1}^n|x_j|^p\right)^{1/p}$ are
strictly monotone for $1\leq p<\infty.$

Let $x = (x_1, \ldots, x_n)$ and $y= (y_1, \ldots, y_n)$ be two
$n$-tuples of nonnegative numbers. Let $x_1^{\downarrow} \geq
x_2^{\downarrow} \geq \ldots \geq x_n^{\downarrow}$  be the
decreasing rearrangement of $x_{1},\dots,x_{n}.$  If for all $1\leq
k\leq n$
\begin{equation*}
\sum^{k}_{j=1} x_j^{\downarrow} \,\,\, \leq
\sum^{k}_{j=1}y_j^{\downarrow},\,\,\, \label{eq12}
\end{equation*}
we say that $x$ is  \emph{weakly majorised} by $y.$
 If, in addition to (\ref{eq12}) we also have
\begin{equation*}
\sum^{n}_{j=1} x_j^{\downarrow} \,\,\, =
\sum^{n}_{j=1}y_j^{\downarrow},\,\,\, \label{eq14}
\end{equation*}
 we say $x$ is
\emph{majorised} by $y,$ and write this as $ x\prec y.$ See Chapter
II of \cite{rbh} for facts on majorization need here.

\begin{lem}\label{r}
Let $x,y$ be two vectors with nonnegative coordinates that are not
permutations of each other. Suppose $x\prec y$. Then for every
strictly convex function $f$ on nonnegative reals and every strictly
monotone symmetric gauge function $\Phi,$ we have
$$\Phi(f(x_1),\dots,f(x_n))<\Phi(f(y_1),\dots,f(y_n)).$$
\end{lem}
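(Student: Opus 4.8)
The plan is to reduce the claimed strict inequality for a strictly monotone symmetric gauge function to the known strict inequality for the $\ell^1$-type gauge (i.e.\ the statement that $\sum f(x_j) < \sum f(y_j)$ for strictly convex $f$ when $x \prec y$ and $x,y$ are not permutations of each other), and then exploit a Schur-type majorization between the vectors $(f(x_1),\dots,f(x_n))$ and $(f(y_1),\dots,f(y_n))$. First I would recall the classical fact that if $x \prec y$ then $(f(x_1),\dots,f(x_n))$ is \emph{weakly majorised} by $(f(y_1),\dots,f(y_n))$ for every convex $f$; this is a standard consequence of the characterization of weak majorization via doubly substochastic matrices together with convexity. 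Since every symmetric gauge function is monotone with respect to weak majorization (again standard, Chapter II of \cite{rbh}), this already gives the non-strict inequality $\Phi(f(x))\le \Phi(f(y))$. The work is to upgrade $\le$ to $<$.

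The key step is to locate \emph{where} strictness enters. Write $u=(f(x_1),\dots,f(x_n))$ and $v=(f(y_1),\dots,f(y_n))$, sorted in decreasing order. I would split into two cases. In the first case, suppose the full sums differ, i.e.\ $\sum_j f(x_j) < \sum_j f(y_j)$; combined with the partial-sum inequalities $\sum_{j=1}^k u_j^\downarrow \le \sum_{j=1}^k v_j^\downarrow$ for all $k$, strict monotonicity of $\Phi$ applied to the sorted vectors $u^\downarrow$ and $v^\downarrow$ (together with permutation-invariance of $\Phi$) gives $\Phi(u) = \Phi(u^\downarrow) < \Phi(v^\downarrow) = \Phi(v)$, and we are done. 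In the second case $\sum_j f(x_j) = \sum_j f(y_j)$, so $u \prec v$; but then by the equality case in the convexity/majorization argument — or directly: $x \prec y$ with $\sum f(x_j)=\sum f(y_j)$ for a \emph{strictly} convex $f$ forces $x$ and $y$ to be permutations of each other — this contradicts the hypothesis. Hence only the first case occurs and the strict inequality holds.

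The main obstacle, and the point that needs the most care, is the strict-monotonicity argument in the first case: strict monotonicity of $\Phi$ as defined in the paper requires $0 \le x_j \le y_j$ coordinatewise with strict inequality in \emph{some} coordinate, whereas what the weak-majorization hypothesis gives is only the chain of partial-sum inequalities, not a coordinatewise inequality. To bridge this I would invoke the standard lemma that if $u^\downarrow$ is weakly majorised by $v^\downarrow$ with $\sum u_j^\downarrow < \sum v_j^\downarrow$, then there exists a vector $w$ with $u^\downarrow \le w$ coordinatewise, $w \ne u^\downarrow$, and $w \prec v^\downarrow$ (one can take $w$ to agree with $v^\downarrow$ suitably and adjust the last coordinate; this is the "fill up to a majorised vector" trick). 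Then monotonicity of $\Phi$ under majorization gives $\Phi(w) \le \Phi(v^\downarrow)$, and strict monotonicity of $\Phi$ gives $\Phi(u^\downarrow) < \Phi(w)$, chaining to the desired conclusion. Assembling these pieces — the weak-majorization consequence of convexity, the equality-case rigidity forcing Case~1, and the fill-up lemma feeding strict monotonicity of $\Phi$ — completes the proof.
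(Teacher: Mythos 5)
Your proof is correct, but it takes a genuinely different route from the paper's. The paper argues directly from the Hardy--Littlewood--P\'olya representation $x=\sum_\sigma a_\sigma y_\sigma$ of a majorised vector as a convex combination of permutations of $y$: since $x$ is not a permutation of $y$, this combination has at least two distinct terms, so strict convexity of $f$ yields $f(x_j)\le\sum_\sigma a_\sigma f(y_{\sigma(j)})$ with strict inequality for some $j$, and the conclusion then drops out in one step from the strict monotonicity, convexity and permutation-invariance of $\Phi$, via $\Phi(f(x))<\Phi\bigl(\sum_\sigma a_\sigma f(y_\sigma)\bigr)\le\sum_\sigma a_\sigma\Phi(f(y_\sigma))=\Phi(f(y))$. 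You instead route the strictness through the scalar case: you record that $(f(x_j))$ is weakly majorised by $(f(y_j))$, use the equality-case rigidity of the classical Schur--HLP inequality to force $\sum_j f(x_j)<\sum_j f(y_j)$, and then invoke the fill-up lemma (weak majorisation is equivalent to coordinatewise domination by some vector that is majorised by the target; see Chapter II of \cite{rbh}) to convert partial-sum domination into the coordinatewise domination that the paper's definition of strict monotonicity of $\Phi$ actually requires, finishing with the isotonicity of symmetric gauge functions under majorisation. Both arguments ultimately rest on the same representation theorem --- your rigidity step is most naturally proved exactly as the paper proves its coordinatewise strict inequality --- but your version makes explicit where strictness enters and correctly identifies and repairs the one genuine pitfall (strict monotonicity needs coordinatewise, not partial-sum, comparison), whereas the paper's proof is shorter because using the convexity of $\Phi$ on the convex combination bypasses the fill-up step entirely.
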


\begin{proof} If $x\prec y$, then $x$ can be expressed as a convex
combination
$$x=\sum a_{\sigma}y_{\sigma},$$
where $\sigma$ varies over all permutations on $n$ symbols, and
$y_{\sigma}$ denotes the vector $(y_{\sigma(1)},\dots,
y_{\sigma(n)}).$ If $x$ and $y$ are not permutations of each other,
there are at least two distinct terms in this convex combination.
Since $f$ is convex, $$f(x_j)\leq \sum a_{\sigma}f(y_{\sigma(j)})$$
for all $j.$ If $f$ is strictly convex, then this inequality is
strict for some $j.$ The statement of the lemma then follows from
the properties of $\Phi.$
\end{proof}

\begin{thm}
Let $f$ be a function from ${\Bbb R}_{+}$ into itself, and let
$\Vert\cdot\Vert_{\Phi}$ be a unitarily invariant norm on ${\Bbb
M}(n).$ Let ${\hat f}_{\Phi}$ be the map from ${\Bbb P}$ into ${\Bbb
R}_{+}$ defined by
$${\hat f}_{\Phi}(A)=\|f(A)\|_{\Phi}.$$ If $f$ is convex,
then so is ${\hat f}_{\Phi}.$ Further, if $f$ is strictly convex and
$\Phi$ is strictly monotone, then ${\hat f}_\Phi$ is strictly
convex.
\end{thm}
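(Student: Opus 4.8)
The plan is to push the statement down to the eigenvalue vectors of $A$, $B$ and $C:=\lambda A+(1-\lambda)B$, and then combine three ingredients: the majorisation between the eigenvalues of a convex combination, coordinatewise (strict) convexity of $f$, and (strict) monotonicity of $\Phi$. First I would record the reduction: since $f$ maps $\mathbb{R}_+$ into $\mathbb{R}_+$, for $X\in\mathbb{P}$ the matrix $f(X)$ is positive semidefinite with eigenvalues $f(x_1),\dots,f(x_n)$, where $x_1,\dots,x_n$ are the eigenvalues of $X$; as $\Phi$ is permutation invariant, $\hat f_\Phi(X)=\Phi\big(f(x_1),\dots,f(x_n)\big)$. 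Fix $A,B\in\mathbb{P}$ and $\lambda\in[0,1]$, put $C=\lambda A+(1-\lambda)B$, write $a,b,c$ for the eigenvalue vectors of $A,B,C$ in decreasing order, and set $p:=\lambda a+(1-\lambda)b$, which is again in decreasing order.

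For the convexity, I would use the Ky Fan maximum principle, $\sum_{j=1}^{k}\lambda_j^{\downarrow}(H)=\max\{\mathrm{tr}(PH):P=P^{*}=P^{2},\ \mathrm{rank}\,P=k\}$ for Hermitian $H$: applied to $C$, together with $\mathrm{tr}(PC)=\lambda\,\mathrm{tr}(PA)+(1-\lambda)\,\mathrm{tr}(PB)$, it yields $c\prec p$. Since $f$ is convex, this majorisation passes to weak majorisation under $f$ applied coordinatewise, so $(f(c_j))\prec_{w}(f(p_j))$, and monotonicity of $\Phi$ gives $\hat f_\Phi(C)=\Phi(f(c))\le\Phi(f(p))$. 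Convexity of $f$ also gives $f(p_j)\le\lambda f(a_j)+(1-\lambda)f(b_j)$ for each $j$, so monotonicity of $\Phi$, followed by the triangle inequality and homogeneity of the norm $\Phi$, gives $\Phi(f(p))\le\Phi(\lambda f(a)+(1-\lambda)f(b))\le\lambda\Phi(f(a))+(1-\lambda)\Phi(f(b))=\lambda\hat f_\Phi(A)+(1-\lambda)\hat f_\Phi(B)$. Chaining these inequalities proves that $\hat f_\Phi$ is convex.

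For the strict case, assume $f$ strictly convex, $\Phi$ strictly monotone, $A\neq B$, $0<\lambda<1$, and suppose for contradiction that equality holds throughout the chain above. From $\Phi(f(p))=\Phi(\lambda f(a)+(1-\lambda)f(b))$ together with $f(p_j)\le\lambda f(a_j)+(1-\lambda)f(b_j)$ coordinatewise and strict monotonicity of $\Phi$, we get $f(p_j)=\lambda f(a_j)+(1-\lambda)f(b_j)$ for every $j$, and then strict convexity of $f$ (with $0<\lambda<1$) forces $a_j=b_j$ for every $j$; thus $a=b$ and hence $p=a$. Next, since $\Phi(f(c))=\Phi(f(p))$ and $c\prec p$, Lemma~\ref{r} forces $c$ and $p$ to be permutations of one another, so (both being decreasing) $c=p=a=b$. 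Consequently $\mathrm{tr}(C^{2})=\mathrm{tr}(A^{2})=\mathrm{tr}(B^{2})=:s$. Expanding $\mathrm{tr}(C^{2})=\lambda^{2}\mathrm{tr}(A^{2})+2\lambda(1-\lambda)\mathrm{tr}(AB)+(1-\lambda)^{2}\mathrm{tr}(B^{2})$ and using $1-\lambda^{2}-(1-\lambda)^{2}=2\lambda(1-\lambda)\neq0$ gives $\mathrm{tr}(AB)=s$, whence $\|A-B\|_{2}^{2}=\mathrm{tr}(A^{2})-2\mathrm{tr}(AB)+\mathrm{tr}(B^{2})=0$, i.e. $A=B$ — a contradiction. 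Therefore the inequality is strict and $\hat f_\Phi$ is strictly convex.

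I expect the main obstacle to be precisely this last part: the convexity itself is routine once Ky Fan's principle and the standard "convex functions preserve majorisation weakly" fact are in hand, but teasing out a contradiction from equality requires tracking where each of the three inequalities can be tight, and in particular recognising that the borderline configuration $c=a=b$ (equal spectra for $A$, $B$ and $C$) cannot occur for $A\neq B$. The short computation with $\mathrm{tr}(C^{2})$ is the device that disposes of that configuration, and orchestrating the several equality cases — rather than the convexity step — is where the care is needed.
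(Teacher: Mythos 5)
Your proof is correct, but it follows a genuinely different route from the paper's. The paper never passes to eigenvalue vectors of $A$ and $B$: it diagonalises $C=\tfrac12(A+B)$, writes $\|f(C)\|_{\Phi}=\Phi\bigl(f(\langle u_j,Cu_j\rangle)\bigr)$ in terms of the diagonal entries of $A$ and $B$ in that basis, applies scalar convexity there, and then gets back to $\|f(A)\|_{\Phi}$ via the operator Jensen inequality $f(\langle u,Au\rangle)\le\langle u,f(A)u\rangle$ together with Schur's theorem $\mathrm{diag}(f(A))\prec\lambda(f(A))$; its strictness argument splits into two cases (some diagonal entry of $A$ and $B$ differs, versus $\mathrm{diag}(A)=\mathrm{diag}(B)=\mathrm{diag}(C)$ with $A,B$ non-diagonal, where Lemma~\ref{r} is applied to $\mathrm{diag}(A)\prec\lambda(A)$). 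You instead work entirely with spectra, using Ky Fan's maximum principle to get $c\prec\lambda a+(1-\lambda)b$ and the standard fact that convex functions push majorisation to weak majorisation; your equality analysis runs linearly through the chain and is closed off by the $\mathrm{tr}(C^{2})$ computation showing that $A$, $B$ and a proper convex combination cannot all share the same spectrum unless $A=B$ --- a device the paper does not need (it rules out its degenerate case by noting $\mathrm{diag}(A)$ cannot be a permutation of $\lambda(A)$ when $A$ is non-diagonal, since $\|A\|_2=\|\lambda(A)\|_2$). Each approach buys something: the paper's localises where strictness enters and handles general $\lambda$ only implicitly (via midpoint convexity plus continuity), while yours treats arbitrary $\lambda$ directly and avoids both the operator Jensen inequality and Schur's theorem at the cost of the extra trace identity. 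One small point of precision: where you say ``monotonicity of $\Phi$ gives $\Phi(f(c))\le\Phi(f(p))$'' from the weak majorisation $(f(c_j))\prec_w(f(p_j))$, coordinatewise monotonicity alone is not the right invocation --- you need the isotonicity of symmetric gauge functions with respect to weak majorisation of nonnegative vectors (equivalently, monotonicity plus convexity plus symmetry, or the Fan dominance principle); this is the same property the paper cites from page 41 of \cite{rbh}, so it is a matter of wording rather than a gap.
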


\begin{proof}
Let $A,B\in {\Bbb P},$ and let $C=(1/2)(A+B).$ Let
$\{\lambda_{j}(C)\}$ denote the decreasingly ordered eigenvalues of
$C,$ and let $\{u_{j}\}$ be the corresponding orthonormal set of
eigenvectors. Then
\begin{eqnarray*}
\|f(C)\|_{\Phi}&=&\Phi\left(\lambda_{1}(f(C)),\dots,\lambda_{n}(f(C))\right)\\
&=&\Phi\left(f(\lambda_{1}(C)),\dots,f(\lambda_{n}(C))\right)\\
&=&\Phi\left(f(\langle u_{1},Cu_{1}\rangle), \dots, f(\langle
u_{n},Cu_{n}\rangle)\right).
\end{eqnarray*} Since $f$ is convex,
\begin{align}
f\left(\langle u_{j}, Cu_{j}\rangle\right) \nonumber
&=f\left(\frac{\langle
u_{j},Au_{j}\rangle+\langle u_{j}, Bu_{j}\rangle}{2}\right)\\
&\leq\frac{1}{2}\left[f(\langle u_{j},Au_{j}\rangle)+f(\langle
u_{j},Bu_{j}\rangle)\right].\label{rf}
\end{align}
Every symmetric gauge function is monotone and convex. So, the
relations above give
\begin{eqnarray}
\Vert f(C)\Vert_{\Phi}&\leq&\frac{1}{2}\Phi\left(f(\langle
u_{1},Au_{1}\rangle),\dots, f(\langle u_{n},Au_{n}\rangle)\right)\nonumber\\
&{}&+\frac{1}{2}\Phi\left(f(\langle u_{1},Bu_{1}\rangle),\dots,
f(\langle u_{n},Bu_{n}\rangle)\right).\label{eqnew}
\end{eqnarray}
Since $f$ is convex, by Problem IX. 8. 14 in \cite{rbh} we see that
$$f\left(\langle u_{j},Au_{j}\rangle\right)\leq \langle
u_{j},f(A)u_{j}\rangle.$$ By the Schur majorisation theorem
(Exercise II. 1.2 in \cite{rbh}) the $n$-tuple $\{\langle
u_{j},f(A)u_{j}\rangle\}$ is majorised by the eigenvalue $n$-tuple
$\{\lambda_{j}(f(A))\}.$ Every symmetric gauge function is monotone
with respect to majorisation ($``$isotone" in the terminology used
on page 41 of \cite{rbh}). Combining these observations we see that
\begin{eqnarray*}
\Phi\left(f(\langle u_{1},Au_{1}\rangle),\dots, f(\langle
u_{n},Au_{n}\rangle\right)
&\leq&\Phi\left(\lambda_{1}(f(A)),\dots,\lambda_{n}(f(A))\right)\\
&=&\|f(A)\|_{\Phi}.
\end{eqnarray*}
The same argument applies to $B$ in place of $A$. Hence
\begin{eqnarray}\label{eqneww}\Vert f(C)\Vert_{\Phi}\leq \frac{1}{2}\Vert
f(A)\Vert_{\Phi}+\frac{1}{2}\Vert f(B)\Vert_{\Phi}. \end{eqnarray}
This shows that ${\hat f}_{\Phi}$ is convex if $f$ is convex. Now,
suppose  $f$ is strictly convex and $\Phi$ is strictly monotone. Let
$A\ne B.$ There are two possibilities: (i) There exists a  $j$ such
that $\langle u_j,Au_j\rangle \neq \langle u_j,Bu_j\rangle.$ Then
for this $j,$ the inequality (\ref{rf}) is strict and hence the
inequality (\ref{eqnew}) is also strict. The argument above then
shows the inequality (\ref{eqneww}) is strict. (ii) If  $\langle
u_j,Au_j\rangle = \langle u_j,Bu_j\rangle$ for all $j,$ then in the
orthonormal basis $\{u_1,\dots,u_n\},$ $C$ is diagonal, and the
diagonals of $A$ and $B$ are equal. This means that
${\mathrm{diag}}(C)={\mathrm{diag}}(A)={\mathrm{diag}}(B).$ Since
$A\neq B,$ neither $A$ nor $B$ is diagonal. By Schur's majorization
  theorem (See (II. 14) of \cite{rbh})
$${\mathrm{diag}}(A)\prec \lambda(A),$$
where $\lambda(A)$ is the vector whose components are the
eigenvalues of $A$. Since $A$ is not diagonal, ${\mathrm{diag}}(A)$
is not a permutation of $\lambda(A)$ (because $||A||_2 =
||\lambda(A)||_2$).  It follows from Lemma \ref{r} that
$$\Vert f\left({\mathrm{diag}(A)}  \right)\Vert_{\Phi}<\Vert f(A)\Vert_\Phi.$$
The same argument applies to $B.$ Since $C={\mathrm{diag}}(A),$ this
shows the inequality (\ref{eqneww}) is strict. This proves the last
statement of the theorem.
\end{proof}

The sum of singular values is a strictly monotone unitarily
invariant norm. So, the function
$${\hat f}(A)={\mathrm{tr}} f(A)$$
is (strictly) convex if $f$ is (strictly) convex. In addition, using
the linearity of the trace function we can see that ${\hat f}(A)$ is
(strictly) concave if $f$ is (strictly) concave. This is a
well-known fact. See \cite{c}.

\begin{cor}\label{E:SC}
The function $f(X)={\mathrm{tr}} X^t$ on positive definite matrices
is strictly concave if $0<t<1$ and strictly convex if $1<t<\infty,$
or if $t<0.$
\end{cor}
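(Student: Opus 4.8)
The plan is to deduce Corollary \ref{E:SC} directly from the theorem just proved, which asserts that $\hat f(A)=\mathrm{tr}\, f(A)$ is strictly convex (resp.\ concave) whenever the scalar function $f$ is strictly convex (resp.\ concave). So the only thing left to verify is the elementary calculus fact about the power function $g_t(s)=s^t$ on the positive half-line $(0,\infty)$: that $g_t$ is strictly concave for $0<t<1$ and strictly convex for $t>1$ and for $t<0$.

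First I would compute $g_t''(s)=t(t-1)s^{t-2}$. Since $s^{t-2}>0$ for all $s>0$, the sign of $g_t''$ is exactly the sign of $t(t-1)$. For $0<t<1$ we have $t>0$ and $t-1<0$, so $t(t-1)<0$ and $g_t$ is strictly concave. For $1<t<\infty$ we have $t>0$ and $t-1>0$, so $t(t-1)>0$ and $g_t$ is strictly convex. For $t<0$ we have $t<0$ and $t-1<0$, so again $t(t-1)>0$ and $g_t$ is strictly convex. (A technical point: strict positivity of $g_t''$ on an interval is sufficient for strict convexity, and likewise for concavity; there is no boundary subtlety because $\mathbb P$ consists of \emph{positive definite} matrices, so eigenvalues stay in the open half-line.)

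Then I would invoke the theorem together with the remark immediately preceding the corollary — that the trace norm (sum of singular values) is a strictly monotone unitarily invariant norm, and that by linearity of the trace the concave case follows as well — to conclude that $X\mapsto \mathrm{tr}\, X^t$ inherits strict concavity when $0<t<1$ and strict convexity when $t>1$ or $t<0$. I do not expect any real obstacle here; the content is entirely in the theorem, and this corollary is essentially a one-line specialization recording the case $f=g_t$. The only thing to be slightly careful about is to state explicitly that $g_t$ maps $\mathbb R_+$ (more precisely the open half-line, where the eigenvalues live) into itself, so that the theorem applies as stated.
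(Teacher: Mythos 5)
Your proposal is correct and matches the paper's (implicit) argument exactly: the paper derives this corollary from the preceding theorem via the remark that the trace is a strictly monotone unitarily invariant norm and that the concave case follows by linearity of the trace, with the only remaining content being the elementary sign computation for $\frac{d^2}{ds^2}s^t=t(t-1)s^{t-2}$ that you carry out. Your added care about the open half-line for $t<0$ is a reasonable, harmless refinement.
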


\begin{lem}\label{L:diff}
Let $f$ be a smooth function on ${\Bbb R}_{+}$ and let ${\hat f}$ be
the function on ${\Bbb P}$ defined as ${\hat f}(X)={\mathrm{tr}}
f(X).$ Then for all $X\in {\Bbb P}$ and $Y\in {\Bbb H},$
$$D{\hat f}(X)(Y)={\mathrm{tr}}\left(f'(X)Y\right).$$
\end{lem}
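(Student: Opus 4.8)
The plan is to compute the Fréchet derivative of $\hat f(X) = \mathrm{tr}\, f(X)$ directly from the definition, using the spectral structure of $X$ and the power series (or difference-quotient) behaviour of $f$. First I would reduce to the case where $f$ is a polynomial: since $f$ is smooth on $\mathbb{R}_+$ and $X$ has spectrum contained in a compact subinterval of $(0,\infty)$, the value of $\hat f$ and its derivative near $X$ depend only on the germ of $f$ on that interval, and polynomials are dense (together with their first derivatives, in $C^1$ of that interval). So it suffices to prove the identity $D\hat f(X)(Y) = \mathrm{tr}(f'(X)Y)$ for $f(s) = s^m$, $m \geq 1$, and then pass to the limit; the limiting step is routine because $X \mapsto f'(X)$ is continuous and the derivative operation commutes with uniform-on-compacts convergence of $f$ and $f'$.

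For the monomial case, I would write $\hat f(X+tY) = \mathrm{tr}\,(X+tY)^m$ and expand the power. The term linear in $t$ is
$$t \cdot \mathrm{tr}\!\left( \sum_{k=0}^{m-1} X^{k} Y X^{m-1-k} \right),$$
and by cyclicity of the trace each summand equals $\mathrm{tr}(X^{m-1} Y)$, so the coefficient of $t$ is $m\,\mathrm{tr}(X^{m-1}Y) = \mathrm{tr}(f'(X)Y)$. Hence $\frac{d}{dt}\big|_{t=0}\hat f(X+tY) = \mathrm{tr}(f'(X)Y)$ for monomials. By linearity this extends to all polynomials $f$, with $f'$ the usual derivative polynomial.

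Finally I would remove the polynomial restriction. Fix $X \in \mathbb{P}$ and $Y \in \mathbb{H}$, and choose $0 < a < b$ so that the spectra of $X + tY$ for $|t|$ small all lie in $[a,b]$. Pick polynomials $p_j$ with $p_j \to f$ and $p_j' \to f'$ uniformly on $[a,b]$. Then $\mathrm{tr}\, p_j(X+tY) \to \mathrm{tr}\, f(X+tY)$ uniformly in the small parameter $t$ (since the eigenvalues stay in $[a,b]$), and the derivatives at $t=0$, namely $\mathrm{tr}(p_j'(X)Y)$, converge to $\mathrm{tr}(f'(X)Y)$; uniform convergence of the functions together with convergence of the derivatives lets us interchange limit and differentiation, giving $D\hat f(X)(Y) = \mathrm{tr}(f'(X)Y)$.

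The only mildly delicate point is this last interchange of limits — ensuring that $\frac{d}{dt}$ at $0$ really passes through the limit over $j$. This is handled by the standard theorem that if $g_j \to g$ pointwise and $g_j' \to h$ uniformly on an interval, then $g' = h$; here $g_j(t) = \mathrm{tr}\, p_j(X+tY)$, whose derivatives $g_j'(t) = \mathrm{tr}(p_j'(X+tY)Y)$ converge uniformly in $t$ because $p_j' \to f'$ uniformly on $[a,b]$ and $\|Y\|_2$ is fixed. Everything else is bookkeeping with cyclicity of the trace. \qed
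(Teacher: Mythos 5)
Your proof is correct, but it takes a genuinely different route from the paper. The paper's argument is a two-line application of the Daleckii--Krein formula: it writes $Df(X)(Y)=L_f(X)\circ Y$, where $L_f(X)$ is the Loewner matrix of divided differences in a basis diagonalising $X$, and then observes that taking the trace picks out only the diagonal of $L_f(X)$, whose entries are $f'(\lambda_i)$, yielding $\mathrm{tr}(f'(X)Y)$. You instead build the identity from scratch: cyclicity of the trace gives it for monomials, linearity gives it for polynomials, and a $C^1$ approximation of $f$ on a compact interval containing the spectra of $X+tY$ (with the standard ``$g_j\to g$ pointwise, $g_j'\to h$ uniformly $\Rightarrow$ $g'=h$'' interchange) gives the general case. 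Your approach is more elementary and self-contained --- it avoids quoting the Daleckii--Krein machinery entirely --- at the cost of being longer; the paper's approach is shorter and, as a by-product, records the full derivative of the matrix-valued map $X\mapsto f(X)$, not merely of its trace, which is useful elsewhere. One bookkeeping point you glide over but which is standard: to get $p_j\to f$ and $p_j'\to f'$ simultaneously uniformly on $[a,b]$, approximate $f'$ by polynomials $q_j$ and set $p_j(s)=f(a)+\int_a^s q_j$; and note that the lemma only requires the directional (G\^ateaux) derivative as the paper defines $D\hat f(X)(Y)$, which is exactly what your limit argument delivers.
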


\begin{proof}
Let $\lambda_{1},\dots,\lambda_{n}$ be the eigenvalues of $X$ and
let $L_{f}(X)$ be the Loewner matrix
$$L_{f}(X)=\left[\frac{f(\lambda_{i})-f(\lambda_{j})}{\lambda_{i}-\lambda_{j}}\right].$$
The difference quotient in this expression is the $ij$th entry of
$L_{f}(X),$ and it is understood that this is equal to
$f'(\lambda_{i})$ if $\lambda_{i}=\lambda_{j}$. By the
Daleckii-Krein formula (Theorem V. 3.3 in \cite{rbh}) the derivative
$Df(X)$ is given by
$$Df(X)(Y)=L_{f}(X)\circ Y,$$
where $\circ$ stands for the Hadamard product (entrywise product) of
two matrices taken in an orthonormal basis in which $X$ is diagonal.
Combining this with the linear functional ${\mathrm{tr}},$ we get
$$D{\hat f}(X)(Y)={\mathrm{tr}} \left(L_{f}(X)\circ
Y\right)={\mathrm{tr}}\left(f'(X)Y\right).$$
\end{proof}

To state the next proposition we need the notion of the weighted
geometric mean of two positive definite matrices. This is defined as
\begin{eqnarray}\label{E:GM}A\#_{t}B=A^{\frac{1}{2}}(A^{-\frac{1}{2}}BA^{-\frac{1}{2}})^tA^{\frac{1}{2}},
\ \ 0\leq t\leq 1. \end{eqnarray}
This is a smooth curve joining $A$
and $B,$ and is a geodesic with respect to the Riemannian distance
$$\delta(A,B)=\Vert\log A^{-\frac{1}{2}}BA^{-\frac{1}{2}}\Vert_{2},$$
on ${\Bbb P}.$ See Chapter 6 of \cite{rbh1}. The right hand side of
(\ref{E:GM}) is meaningful for all $t\in {\Bbb R},$  and we continue
to use the notation $A\#_{t}B$ for it.

\begin{prop}\label{P:Der}
Let $A$ be any element of ${\Bbb P}$ and let $t\in {\Bbb R}.$ Let
$h:{\Bbb P}\to {\Bbb R}_{+}$ be the map $h(X)={\mathrm{tr}}
\left(A^{\frac{1}{2}}XA^{\frac{1}{2}}\right)^{t}.$ Then
\begin{eqnarray}Dh(X)(Y)=t\ {\mathrm{tr}}\left(A\#_{1-t}X^{-1}\right)Y;
\end{eqnarray}
i.e.,
\begin{eqnarray}
\nabla h(X)=t\left(A\#_{1-t}X^{-1}\right).
\end{eqnarray}
\end{prop}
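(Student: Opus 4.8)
The plan is to reduce everything to the chain rule applied to $h = \hat{g}\circ \psi$, where $\psi(X) = A^{1/2}XA^{1/2}$ is an affine (indeed linear) map of $\mathbb{P}$ into itself and $\hat{g}(Z) = \mathrm{tr}\, g(Z)$ with $g(s) = s^t$. First I would record that $D\psi(X)(Y) = A^{1/2}YA^{1/2}$ for every $Y\in\mathbb{H}$, since $\psi$ is linear. Next, by Lemma~\ref{L:diff} applied to the smooth function $g(s)=s^t$ on $\mathbb{R}_+$, we have $D\hat{g}(Z)(W) = \mathrm{tr}\,(g'(Z)W) = t\,\mathrm{tr}\,(Z^{t-1}W)$ for $Z\in\mathbb{P}$, $W\in\mathbb{H}$. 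Composing,
\begin{eqnarray*}
Dh(X)(Y) = D\hat{g}(\psi(X))\big(D\psi(X)(Y)\big)
= t\,\mathrm{tr}\left(\left(A^{1/2}XA^{1/2}\right)^{t-1}A^{1/2}YA^{1/2}\right).
\end{eqnarray*}

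The next step is to simplify the matrix $A^{1/2}\left(A^{1/2}XA^{1/2}\right)^{t-1}A^{1/2}$ using the cyclicity of the trace, so that $Dh(X)(Y) = t\,\mathrm{tr}\!\left(A^{1/2}\left(A^{1/2}XA^{1/2}\right)^{t-1}A^{1/2}\,Y\right)$, and then to identify the Hermitian matrix $A^{1/2}\left(A^{1/2}XA^{1/2}\right)^{t-1}A^{1/2}$ as $A\#_{1-t}X^{-1}$. This identification is the heart of the argument. Writing $s = 1-t$, one wants
\begin{eqnarray*}
A\#_{s}X^{-1} = A^{1/2}\left(A^{-1/2}X^{-1}A^{-1/2}\right)^{s}A^{1/2}
\stackrel{?}{=} A^{1/2}\left(A^{1/2}XA^{1/2}\right)^{-s}A^{1/2}.
\end{eqnarray*}
This follows from the congruence-invariance identity for the geometric mean, $C^{1/2}(A\#_s B)C^{1/2} = (C^{1/2}AC^{1/2})\#_s(C^{1/2}BC^{1/2})$, together with $A\#_s A^{-1} = A^{1-2s}$ and the fact that $A\#_s B = B\#_{1-s}A$; alternatively, one can give a direct elementary verification. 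The cleanest route: apply the congruence by $A^{1/2}$ to $A^{-1}\#_s X^{-1}$, using $A^{-1}\#_s X^{-1} = (A\#_s X)^{-1}$, to get $A^{1/2}(A^{-1}\#_s X^{-1})A^{1/2} = (A^{1/2}A^{-1}A^{1/2})\#_s(A^{1/2}X^{-1}A^{1/2})^{-1}\cdot(\cdots)$—but this needs care, so in practice I would verify the scalar/diagonal case first and then invoke congruence-covariance of $\#_s$ to conclude in general. Once the identity is in hand, $Dh(X)(Y) = t\,\mathrm{tr}\!\left((A\#_{1-t}X^{-1})Y\right)$, which is exactly the claim, and $\nabla h(X) = t(A\#_{1-t}X^{-1})$ follows by the defining relation between $Dh$ and $\nabla h$.

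I expect the main obstacle to be the algebraic identity $A^{1/2}\left(A^{1/2}XA^{1/2}\right)^{-s}A^{1/2} = A\#_{s}X^{-1}$; everything else (the chain rule, Lemma~\ref{L:diff}, cyclicity of trace) is routine. The subtlety is that $\left(A^{1/2}XA^{1/2}\right)^{-s}$ does not commute with $A^{1/2}$, so one cannot naively move powers around; the correct tool is the transformer/congruence-covariance property of the weighted geometric mean, valid for all real exponents here since both arguments are positive definite. It is worth checking that $t-1 = -s < 0$ causes no trouble: $A^{1/2}XA^{1/2}\in\mathbb{P}$, so its (negative) power is well-defined and smooth in $X$, which also justifies the differentiability needed to apply Lemma~\ref{L:diff} after the linear substitution.
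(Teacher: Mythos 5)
Your proposal is correct and follows essentially the same route as the paper: the chain rule through the linear map $X\mapsto A^{1/2}XA^{1/2}$, Lemma~\ref{L:diff} applied to $\mathrm{tr}\,Z^{t}$, cyclicity of the trace, and the identification of $A^{1/2}\left(A^{1/2}XA^{1/2}\right)^{t-1}A^{1/2}$ with $A\#_{1-t}X^{-1}$. The one step you single out as the ``main obstacle'' requires no congruence-covariance of $\#_{s}$ and no reduction to the commuting case: since $\left(A^{1/2}XA^{1/2}\right)^{-1}=A^{-1/2}X^{-1}A^{-1/2}$, one has $\left(A^{1/2}XA^{1/2}\right)^{t-1}=\left(A^{-1/2}X^{-1}A^{-1/2}\right)^{1-t}$, and sandwiching by $A^{1/2}$ yields exactly the right-hand side of the definition (\ref{E:GM}) with $B=X^{-1}$ and exponent $1-t$, which is how the paper concludes.
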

\begin{proof}
Let $k(X)={\mathrm{tr}} X^{t}.$ Then by Lemma \ref{L:diff},
$Dk(X)(Y)=t {\mathrm{tr}} X^{t-1}Y.$ By the chain rule
\begin{eqnarray*}
Dh(X)(Y)&=&Dk(A^{\frac{1}{2}}XA^{\frac{1}{2}})(A^{\frac{1}{2}}YA^{\frac{1}{2}})\\
&=&t\
{\mathrm{tr}}(A^{\frac{1}{2}}XA^{\frac{1}{2}})^{t-1}A^{\frac{1}{2}}YA^{\frac{1}{2}}\\
&=&t\
{\mathrm{tr}}A^{\frac{1}{2}}\left(A^{-\frac{1}{2}}X^{-1}A^{-\frac{1}{2}}\right)^{1-t}A^{\frac{1}{2}}Y\\
&=&t\ {\mathrm{tr}}\left(A\#_{1-t}X^{-1}\right)Y.
\end{eqnarray*}
\end{proof}

Extremal representations for the fidelity $F(A,B)$ are useful in
deriving various relations. See \cite{op,bjl}. Our next theorem
gives such representations for $F_{t}(A,B).$ Some of these have been
derived in \cite{fl} and \cite{bft}.

\begin{thm}\label{T:Ineq}
Let $A,B$ be any two elements of ${\Bbb P}$ and let $0<t<1.$ Then
\begin{itemize}
\item[(i)] $F_{t}(A,B)=\underset{X\in {\Bbb P}}{\min}\,\,\,
{\mathrm{tr}}\left[(1-t)\left(A^{\frac{t-1}{2t}}XA^{\frac{t-1}{2t}}\right)^{\frac{t}{t-1}}+tXB\right].$
\item[(ii)] $F_{t}(A,B)=\underset{X\in {\Bbb P}}{\min}\,\,\,
\left[{\mathrm{tr}}(A^{\frac{t-1}{2t}}XA^{\frac{t-1}{2t}})^{\frac{t}{t-1}}\right]^{1-t}
\left[{\mathrm{tr}} XB\right]^t.$
\item[(iii)] $F_{t}(A,B)=\underset{X\in {\Bbb P}}{\min}\,\,\,
{\mathrm{tr}}\left[tA^{\frac{1-t}{t}}X+(1-t)\left(B^{-\frac{1}{2}}XB^{-\frac{1}{2}}\right)^{\frac{t}{t-1}}\right].$
\item[(iv)] $F_{t}(A,B)=\underset{X\in {\Bbb P}}{\min}\,\,\,
\left[{\mathrm{tr}}A^{\frac{1-t}{t}}X\right]^t\left[{\mathrm{tr}}(B^{-\frac{1}{2}}XB^{-\frac{1}{2}})^{\frac{t}{t-1}}\right]^{1-t}.$
\end{itemize}
\end{thm}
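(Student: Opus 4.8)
The plan is to prove all four extremal representations by the same strategy: reduce each one to a scalar (or one-variable matrix) optimization that is resolved by a classical inequality, and then identify the minimizing $X$ explicitly so that the minimum value equals $F_t(A,B)$. The two key inequalities I would invoke are the operator/trace form of the weighted arithmetic--geometric mergers and, more precisely, \emph{Young's inequality} in the form
\begin{eqnarray*}
{\mathrm{tr}}\,(PQ)\leq (1-t)\,{\mathrm{tr}}\,P^{\frac{1}{1-t}}+t\,{\mathrm{tr}}\,Q^{\frac{1}{t}},\qquad 0<t<1,
\end{eqnarray*}
valid for $P,Q\in{\Bbb P}$, with equality iff $P^{\frac{1}{1-t}}=Q^{\frac{1}{t}}$; together with its ``homogenized'' companion, H\"older's inequality for traces,
\begin{eqnarray*}
{\mathrm{tr}}\,(PQ)\leq \left({\mathrm{tr}}\,P^{\frac{1}{1-t}}\right)^{1-t}\left({\mathrm{tr}}\,Q^{\frac{1}{t}}\right)^{t},
\end{eqnarray*}
with equality iff $P^{\frac{1}{1-t}}$ and $Q^{\frac{1}{t}}$ are proportional. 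Statements (i) and (iii) will come from the Young form, while (ii) and (iv) come from the H\"older form; moreover, since the scalar cases of both inequalities are equivalent after rescaling, (ii) follows from (i) and (iv) from (iii) by optimizing over a scalar dilation $X\mapsto sX$.

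Concretely, for (i) I would set, for a given $X\in{\Bbb P}$, $P=\bigl(A^{\frac{t-1}{2t}}XA^{\frac{t-1}{2t}}\bigr)^{\frac{t}{t-1}}$ and look for $Q$ with $XB$ expressed via $Q$; the natural move is to write ${\mathrm{tr}}(XB)={\mathrm{tr}}\bigl(A^{\frac{1-t}{2t}}\,\wt X\,A^{\frac{1-t}{2t}}\cdot(\text{something})\bigr)$ after the substitution $\wt X=A^{\frac{t-1}{2t}}XA^{\frac{t-1}{2t}}$, so that the objective becomes
\begin{eqnarray*}
{\mathrm{tr}}\left[(1-t)\,\wt X^{\frac{t}{t-1}}+t\,\wt X\,\bigl(A^{\frac{1-t}{2t}}BA^{\frac{1-t}{2t}}\bigr)\right].
\end{eqnarray*}
Now apply Young's inequality with exponents $1/(1-t)$ on the first block and $1/t$ on $\wt X$ against the fixed positive matrix $M:=A^{\frac{1-t}{2t}}BA^{\frac{1-t}{2t}}$: writing $t\,{\mathrm{tr}}(\wt X M)={\mathrm{tr}}\bigl((\wt X)(tM)\bigr)$ and matching with ${\mathrm{tr}}\bigl(\wt X^{\frac{t}{t-1}\cdot\frac{t-1}{t}}\cdots\bigr)$ one gets a lower bound equal to ${\mathrm{tr}}\,M^{t}$. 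Since $M^{t}=\bigl(A^{\frac{1-t}{2t}}BA^{\frac{1-t}{2t}}\bigr)^{t}=F_t(A,B)$ (this is exactly the definition of $F_t$), the minimum is at least $F_t(A,B)$; equality in Young forces $\wt X^{\frac{t}{t-1}}$ and $(\wt X M)$-type blocks to satisfy the equality condition, which pins down $\wt X$ (up to the right power of $M$), and substituting that $\wt X$ back gives exactly $F_t(A,B)$. The same scheme, with the roles of $A$ and $B$ swapped and the exponent $\frac{1-t}{t}$ appearing on $A$, handles (iii); there the relevant fixed matrix is $B^{-\frac{1}{2}}(\cdots)B^{-\frac{1}{2}}$ and one uses the identity ${\mathrm{tr}}\,(A^{\frac{1-t}{t}}X)$-type manipulation to land again on ${\mathrm{tr}}\,\bigl(A^{\frac{1-t}{2t}}BA^{\frac{1-t}{2t}}\bigr)^{t}$.

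For (ii) and (iv) I would not redo the matrix analysis but instead observe that the right-hand side of (ii) is $\inf_{X}\,g(X)^{1-t}h(X)^{t}$ where $g(X)={\mathrm{tr}}(A^{\frac{t-1}{2t}}XA^{\frac{t-1}{2t}})^{\frac{t}{t-1}}$ and $h(X)={\mathrm{tr}}\,XB$; both $g$ and $h$ are positively homogeneous (of degrees $\frac{t}{t-1}<0$ and $1$ respectively), so replacing $X$ by $sX$, $s>0$, and minimizing over $s$ the product $g(sX)^{1-t}h(sX)^{t}=s^{-t+ t}\,\bigl(\cdots\bigr)$ reduces to the weighted AM--GM inequality $a^{1-t}b^{t}\leq (1-t)a+tb$ applied to $a=g(X)$, $b=h(X)$ after the optimal scaling; this converts the product form (ii) into the sum form (i), and likewise (iv) into (iii). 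Hence (ii) and (iv) follow from (i) and (iii) by this one-line homogenization argument.

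The main obstacle I anticipate is purely bookkeeping-but-delicate: getting the fractional powers, the conjugate exponents $\tfrac{t}{t-1}$ versus $\tfrac{1-t}{t}$, and the congruences $X\mapsto A^{\pm\frac{1-t}{2t}}XA^{\pm\frac{1-t}{2t}}$ lined up so that Young's inequality applies with the correct matching condition, and then verifying that the $X$ (equivalently $\wt X$) forced by the equality case is actually positive definite and reproduces $F_t(A,B)$ exactly rather than some rearranged trace. One subtlety worth flagging: Young's inequality for traces needs the exponents to be conjugate and the matrices positive; the block $\wt X^{\frac{t}{t-1}}$ has a \emph{negative} exponent when $0<t<1$ (since $\frac{t}{t-1}<0$), so I must be careful to phrase the inequality as ${\mathrm{tr}}(PQ)\le (1-t){\mathrm{tr}}P^{1/(1-t)}+t\,{\mathrm{tr}}Q^{1/t}$ with $P=\wt X^{\frac{t}{t-1}}$ so that $P^{1/(1-t)}=\wt X^{t/((t-1)(1-t))}$ — i.e. track the signs so the exponents land where claimed. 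Once the exponent arithmetic is set straight, each part is a two-line application of the equality-case analysis, and the identification of the minimizer is automatic from the Young/H\"older equality condition.
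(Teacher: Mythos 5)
Your overall architecture is reasonable, and the homogenization observation --- that the product forms (ii), (iv) and the sum forms (i), (iii) are equivalent via $X\mapsto sX$ together with scalar AM--GM (the product $g(X)^{1-t}h(X)^t$ is scale-invariant because the homogeneity degrees $\tfrac{t}{t-1}$ and $1$ weight out to zero) --- is correct and is a nice way to halve the work; the paper does not make this link explicit. The genuine gap is in your proof of the sum forms (i) and (iii) themselves. After your substitution $\wt X=A^{\frac{t-1}{2t}}XA^{\frac{t-1}{2t}}$, the objective in (i) is $(1-t)\,{\mathrm{tr}}\,\wt X^{\frac{t}{t-1}}+t\,{\mathrm{tr}}(\wt XM)$ with $M=A^{\frac{1-t}{2t}}BA^{\frac{1-t}{2t}}$, and the trace Young inequality ${\mathrm{tr}}(PQ)\le(1-t){\mathrm{tr}}P^{\frac{1}{1-t}}+t\,{\mathrm{tr}}Q^{\frac1t}$ forces $P=\wt X^{-t}$ and $Q=(M^{\frac12}\wt XM^{\frac12})^t$ (you must symmetrize, since $\wt XM$ is not Hermitian). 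What Young then delivers is the lower bound ${\mathrm{tr}}\bigl(\wt X^{-t}(M^{\frac12}\wt XM^{\frac12})^t\bigr)$, which depends on $\wt X$ and is \emph{not} identically ${\mathrm{tr}}\,M^t$; the assertion that ``one gets a lower bound equal to ${\mathrm{tr}}\,M^t$'' conceals the nontrivial claim ${\mathrm{tr}}\bigl(\wt X^{-t}(M^{\frac12}\wt XM^{\frac12})^t\bigr)\ge{\mathrm{tr}}\,M^t$ for all $\wt X>0$, which is an Araki--Lieb--Thirring/Lieb-type inequality and is essentially the hard content of the Frank--Lieb derivation, not a consequence of equality-case bookkeeping. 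So as written the argument does not close.

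Two ways to repair it, both present in the paper. For (iii) the paper sidesteps the inequality entirely: the objective is strictly convex (Corollary \ref{E:SC}, since $\tfrac{t}{t-1}<0$), its gradient is computed in closed form via Proposition \ref{P:Der} as $t\bigl(A^{\frac{1-t}{t}}-B^{-1}\#_{\frac{1}{1-t}}X^{-1}\bigr)$, and the unique critical point $X_0=A^{\frac{t-1}{t}}\#_tB$ is exhibited and evaluated to give $F_t(A,B)$. For (iv) the paper applies the matrix H\"older inequality in the \emph{other} direction: writing $F_t(A,B)={\mathrm{tr}}\,|ST|^{2t}$ with $S=A^{\frac{1-t}{2t}}X^{\frac12}$, $T=X^{-\frac12}B^{\frac12}$ (the $X^{-\frac12}X^{\frac12}$ insertion trick), H\"older immediately gives $F_t\le\bigl[{\mathrm{tr}}A^{\frac{1-t}{t}}X\bigr]^t\bigl[{\mathrm{tr}}(B^{-\frac12}XB^{-\frac12})^{\frac{t}{t-1}}\bigr]^{1-t}$ with no auxiliary inequality needed, and the same $X_0$ achieves equality. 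If you prove the product forms first by this route and then pass to the sum forms via AM--GM (the easy direction: sum $\ge$ product pointwise), your scheme becomes complete. Finally, even granting your inequality, you still owe the reader the explicit positive definite minimizer and the verification that the objective there equals $F_t(A,B)$; ``automatic from the equality condition'' is not a proof of attainment.
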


\begin{proof}
The representations (i) and (ii) have been derived and used in \cite{fl}. We will give here proofs of (iii) and (iv). The same ideas can be used to give proofs of (i) and (ii), which are different from the ones given in \cite{fl}.

 (iii) By Corollary \ref{E:SC}, the function
$$f(X)={\mathrm{tr}}\left[tA^{\frac{1-t}{t}}X+(1-t)\left(B^{-\frac{1}{2}}XB^{-\frac{1}{2}}\right)^{\frac{t}{t-1}}\right]$$
is strictly convex for $0<t<1.$ Using Proposition \ref{P:Der} we see
that $$\nabla
f(X)=t\left(A^{\frac{1-t}{t}}-B^{-1}\#_{\frac{1}{1-t}}X^{-1}\right).$$ So
$\nabla f(X_{0})=0$ if and only if
\begin{eqnarray}\label{E:GE}A^{\frac{1-t}{t}}=B^{-1}\#_{\frac{1}{1-t}}X_{0}^{-1}.
\end{eqnarray}
Now, if $C=Y^{-1}\#_{\alpha}X^{-1},$ then from the definition
(\ref{E:GM}) one can see that $X=Y\#_{\frac{1}{\alpha}}C^{-1}.$ So,
from (\ref{E:GE}) we see that $\nabla f(X_{0})=0$ if and only if
\begin{eqnarray*}
X_{0}=B\#_{1-t}A^{\frac{t-1}{t}}=A^{\frac{t-1}{t}}\#_{t}B.
\end{eqnarray*}
A little calculation shows that
$${\mathrm{tr}}A^{\frac{1-t}{t}}X_{0}={\mathrm{tr}}\left(B^{-\frac{1}{2}}X_0B^{-\frac{1}{2}}\right)^{\frac{t}{t-1}}=
{\mathrm{tr}}\left(B^{\frac{1}{2}}A^{\frac{1-t}{t}}B^{\frac{1}{2}}\right)^{t}.$$
For any two positive matrices $P$ and $Q,$
$${\mathrm{tr}}\left(P^{\frac{1}{2}}QP^{\frac{1}{2}}\right)^{t} = {\mathrm{tr}}Q^{\frac{1}{2}}P^{\frac{1}{2}}\left(P^{\frac{1}{2}}QP^{\frac{1}{2}}\right)^{t}P^{-\frac{1}{2}}Q^{-\frac{1}{2}}={\mathrm{tr}}\left(Q^{\frac{1}{2}}PQ^{\frac{1}{2}}\right)^{t}.$$
 Hence
$${\mathrm{tr}}A^{\frac{1-t}{t}}X_{0}={\mathrm{tr}}\left(B^{-\frac{1}{2}}X_0B^{-\frac{1}{2}}\right)^{\frac{t}{t-1}}={\mathrm{tr}}\left(A^{\frac{1-t}{2t}}BA^{\frac{1-t}{2t}}\right)^{t}=F_{t}(A,B).$$
We have shown that $X_{0}$ is the unique minimizer for the problem
(iii) and the minimum value is equal to $F_{t}(A,B).$

(iv) For an $n\times n$ matrix  $X$, let $|X|$be the absolute value
of $X$ defined as $|X|=(X^{*}X)^{\frac{1}{2}}.$ Let $p,q,r$ be
positive numbers with $\frac{1}{p}+\frac{1}{q}=\frac{1}{r}.$ By the
matrix version of H\"older's inequality (Exercise IV. 2.7 in
\cite{rbh})
$${\mathrm{tr}}|ST|^{r}\leq
\left({\mathrm{tr}}|S|^{p}\right)^{\frac{r}{p}}\left({\mathrm{tr}}|T|^{q}\right)^{\frac{r}{q}},$$
Note that
\begin{eqnarray*}
F_{t}(A,B)&=&{\mathrm{tr}}\left(A^{\frac{1-t}{2t}}BA^{\frac{1-t}{2t}}\right)^{t}\\
&=&{\mathrm{tr}}\left(B^{\frac{1}{2}}A^{\frac{1-t}{t}}B^{\frac{1}{2}}\right)^{t}\\
&=&{\mathrm{tr}}
\left(B^{\frac{1}{2}}X^{-\frac{1}{2}}X^{\frac{1}{2}}A^{\frac{1-t}{2t}}A^{\frac{1-t}{2t}}X^{\frac{1}{2}}X^{-\frac{1}{2}}
B^{\frac{1}{2}}\right)^{t}.
\end{eqnarray*}
Taking $S=A^{\frac{1-t}{2t}}X^{\frac{1}{2}},$ $T=X^{-\frac{1}{2}}B^{\frac{1}{2}},$ $p=1$ and $q=\frac{t}{1-t}$
in H\"older's inequality we get
\begin{eqnarray*}
F_t(A,B)
&\leq&
\left[{\mathrm{tr}}\left(X^{\frac{1}{2}}A^{\frac{1-t}{t}}X^{\frac{1}{2}}\right)\right]^{t}
\left[{\mathrm{tr}}\left(B^{\frac{1}{2}}X^{-1}B^{\frac{1}{2}}\right)^{\frac{t}{1-t}}\right]^{1-t}\\
&=&\left[{\mathrm{tr}}A^{\frac{1-t}{t}}X\right]^{t}
\left[{\mathrm{tr}} \left(B^{-\frac{1}{2}}XB^{-\frac{1}{2}}\right)^{\frac{t}{t-1}}\right]^{1-t}.
\end{eqnarray*}
We have seen in the proof of (iii) that when
$X=X_{0}=A^{\frac{t-1}{t}}\#_{t}B,$ then each of the
expressions inside the square brackets on the right hand side is equal to
$F_{t}(A,B).$ This proves (iv).
\end{proof}

For given $A,B\in {\Bbb P}$ let
\begin{eqnarray}
\gamma(t)=\left(A^{\frac{1-t}{2t}}BA^{\frac{1-t}{2t}}\right)^{t} \ \
\ \ 0\leq t\leq 1.
\end{eqnarray} The value $\gamma(0)$ is given by the following
proposition, first established in \cite{ad}.

\begin{prop} For all $A,B\in {\Bbb P}$
we have
$$\lim_{t\to 0^{+}} \left(A^{\frac{1-t}{2t}}BA^{\frac{1-t}{2t}}\right)^{t}
=A.$$
\end{prop}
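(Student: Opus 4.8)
The plan is to prove this by a L\"owner-order squeeze, exploiting the fact that, after conjugation, the argument of the power is trapped between two scalar multiples of a fixed power of $A$. Write $f(t)=\bigl(A^{\frac{1-t}{2t}}BA^{\frac{1-t}{2t}}\bigr)^{t}$ for the quantity in question, and fix $0<t<1$. Since $B$ is positive definite, $\lambda_{n}(B)\,I\le B\le\lambda_{1}(B)\,I$, where $\lambda_{1}(B)\ge\cdots\ge\lambda_{n}(B)>0$ are its eigenvalues. The congruence $X\mapsto A^{\frac{1-t}{2t}}XA^{\frac{1-t}{2t}}$ is a positive linear map, hence order preserving, so
$$\lambda_{n}(B)\,A^{\frac{1-t}{t}}\ \le\ A^{\frac{1-t}{2t}}BA^{\frac{1-t}{2t}}\ \le\ \lambda_{1}(B)\,A^{\frac{1-t}{t}},$$
where we used $A^{\frac{1-t}{2t}}A^{\frac{1-t}{2t}}=A^{\frac{1-t}{t}}$.

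Next I would take $t$-th powers. For $0<t<1$ the map $x\mapsto x^{t}$ on $(0,\infty)$ is operator monotone (L\"owner--Heinz; see \cite{rbh}), and it carries $\lambda X$ to $\lambda^{t}X^{t}$ for every scalar $\lambda>0$. Applying it to the displayed inequalities, and noting $\bigl(A^{\frac{1-t}{t}}\bigr)^{t}=A^{1-t}$, gives
$$\lambda_{n}(B)^{t}\,A^{1-t}\ \le\ f(t)\ \le\ \lambda_{1}(B)^{t}\,A^{1-t}\qquad(0<t<1).$$
As $t\to0^{+}$ we have $\lambda_{n}(B)^{t}\to1$, $\lambda_{1}(B)^{t}\to1$, and $A^{1-t}\to A$, so both outer terms converge to $A$. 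Since $0\le f(t)-\lambda_{n}(B)^{t}A^{1-t}\le\bigl(\lambda_{1}(B)^{t}-\lambda_{n}(B)^{t}\bigr)A^{1-t}$, positivity yields $\bigl\|f(t)-\lambda_{n}(B)^{t}A^{1-t}\bigr\|\le\bigl(\lambda_{1}(B)^{t}-\lambda_{n}(B)^{t}\bigr)\|A^{1-t}\|\to0$, and hence $f(t)\to A$.

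This argument is short, and I do not anticipate a real obstacle: the only nonelementary input is the operator monotonicity of $x\mapsto x^{t}$ for $0<t<1$, which is classical, and the exponent bookkeeping $\frac{1-t}{2t}\cdot 2\cdot t=1-t$ is exactly what makes the diverging power of $A$ collapse to $A$ in the limit. (Equivalently, one may first write $\frac{1-t}{2t}=\frac{1}{2t}-\frac12$ to replace $B$ by the fixed matrix $A^{-1/2}BA^{-1/2}$ and conjugate by $A^{1/(2t)}$, using $A^{1/t}$ raised to the $t$-th power equals $A$; the squeeze is then identical.)
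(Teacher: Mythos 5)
Your proof is correct and is essentially the paper's own argument: sandwich $B$ between $\lambda_n(B)I$ and $\lambda_1(B)I$, conjugate, apply operator monotonicity of $x\mapsto x^t$ to get $\alpha^t A^{1-t}\le f(t)\le \beta^t A^{1-t}$, and let $t\to 0^+$. Your final norm estimate justifying the squeeze is slightly more careful than the paper's one-line ``taking the limit,'' but the route is the same.
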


\begin{proof} Let $\alpha,\beta$ be positive numbers such that
$\alpha I\leq B\leq \beta I.$ Then
$$\alpha A^{\frac{1-t}{t}}\leq
A^{\frac{1-t}{2t}}BA^{\frac{1-t}{2t}}\leq \beta A^{\frac{1-t}{t}},$$
and hence for $0<t<1,$
$$\alpha^{t}A^{1-t}\leq
\left(A^{\frac{1-t}{2t}}BA^{\frac{1-t}{2t}}\right)^{t}\leq
\beta^{t}A^{1-t}.$$ Taking the limit as $t\to 0,$ we see that
$$A\leq \lim_{t\to
0^{+}}\left(A^{\frac{1-t}{2t}}BA^{\frac{1-t}{2t}}\right)^{t}\leq
A.$$ This proves the proposition.
\end{proof}

Thus $\gamma(t), 0\leq t\leq 1,$ is a differentiable curve joining
$A$ and $B.$ It is of interest to compare this with two other
curves: the Riemannian geodesic (\ref{E:GM}) and the straight line
segment. In this direction we have

\begin{thm} For $0<t<1$
\begin{eqnarray}\label{E:Tracein}
{\mathrm{tr}}A\#_{t}B\leq {\mathrm{tr}} A^{1-t}B^t\leq
{\mathrm{tr}}\left(A^{\frac{1-t}{2t}}BA^{\frac{1-t}{2t}}\right)^{t}\leq
{\mathrm{tr}}\left[(1-t)A+tB\right].
\end{eqnarray}
\end{thm}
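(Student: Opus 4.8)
The plan is to verify the chain (\ref{E:Tracein}) one inequality at a time, handling the two rightmost inequalities first (they are short consequences of material already set up) and leaving the leftmost for last, since it is the one that will require genuinely external input.

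For the rightmost inequality, ${\mathrm{tr}}(A^{\frac{1-t}{2t}}BA^{\frac{1-t}{2t}})^{t}\le{\mathrm{tr}}[(1-t)A+tB]$, I would use the extremal representation of Theorem \ref{T:Ineq}(i) and evaluate the functional being minimised at the single test point $X=I$. Indeed $A^{\frac{t-1}{2t}}\,I\,A^{\frac{t-1}{2t}}=A^{\frac{t-1}{t}}$, so $(A^{\frac{t-1}{2t}}\,I\,A^{\frac{t-1}{2t}})^{\frac{t}{t-1}}=A$, while $t\,I\,B=tB$; hence the value at $X=I$ is exactly ${\mathrm{tr}}[(1-t)A+tB]$, and since $F_{t}(A,B)$ is the infimum over ${\Bbb P}$ it cannot exceed this. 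This is a one-line argument.

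For the middle inequality, ${\mathrm{tr}}(A^{1-t}B^{t})\le{\mathrm{tr}}(A^{\frac{1-t}{2t}}BA^{\frac{1-t}{2t}})^{t}$, I would reduce to the Araki--Lieb--Thirring trace inequality. Put $P=A^{\frac{1-t}{t}}$, so that $P^{t}=A^{1-t}$ and $P^{1/2}=A^{\frac{1-t}{2t}}$; the claim becomes ${\mathrm{tr}}(P^{t}B^{t})\le{\mathrm{tr}}(P^{1/2}BP^{1/2})^{t}$. For exponents in $(0,1]$ the Araki--Lieb--Thirring inequality reads ${\mathrm{tr}}(X^{1/2}YX^{1/2})^{t}\ge{\mathrm{tr}}(X^{t/2}Y^{t}X^{t/2})$ for positive definite $X,Y$; applying it with $X=P$, $Y=B$ and using cyclicity of the trace, ${\mathrm{tr}}(P^{1/2}BP^{1/2})^{t}\ge{\mathrm{tr}}\,P^{t/2}B^{t}P^{t/2}={\mathrm{tr}}\,P^{t}B^{t}={\mathrm{tr}}\,A^{1-t}B^{t}$, as required.

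The leftmost inequality, ${\mathrm{tr}}\,A\#_{t}B\le{\mathrm{tr}}\,A^{1-t}B^{t}$, is where I expect the main obstacle to lie: it is the (classical) comparison of the weighted operator geometric mean with the ``power product'', and, unlike the other two steps, it is not a quick corollary of anything developed above (in particular it is a genuinely one-sided trace inequality, since $\lambda(A\#_{t}B)$ and $\lambda(A^{1-t}B^{t})$ are not comparable in the operator order and have the same product $(\det A)^{1-t}(\det B)^{t}$). One route is simply to cite it as a known fact; another is to derive it from the Araki--Lieb--Thirring inequality together with the log-majorised Golden--Thompson inequality, using the identity $A\#_{t}B=B^{1/2}(B^{-1/2}AB^{-1/2})^{1-t}B^{1/2}$ and the determinant identity above to pass from a weak log-majorisation to the trace inequality. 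Modulo this input, the theorem follows, with equality throughout in the commuting case.
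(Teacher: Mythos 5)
Your proposal is correct and follows essentially the same route as the paper: the last inequality from Theorem \ref{T:Ineq}(i) with the test point $X=I$, the middle one from the (Araki--)Lieb--Thirring inequality, and the first one cited as the known result of \cite{bg}. Your write-up merely supplies more detail on the substitution $P=A^{\frac{1-t}{t}}$ for the middle step, which the paper leaves implicit (and spells out later via log-majorisation in its appendix).
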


The first inequality in (\ref{E:Tracein}) is known; see e.g.,
\cite{bg}. The second inequality follows from the Lieb-Thirring
inequality \cite{lt}, and this has been recorded in the papers
\cite{fl,mds,wwy}. The last inequality follows from Theorem
\ref{T:Ineq} (i) upon choosing $X=I.$

Our next proposition gives a formula for the derivative, with
respect to $t,$ of $F_{t}(A,B).$ This result has been obtained
earlier as Proposition 15 in \cite{mds} and as the main ingredient
in the proof of Proposition 11 in \cite{wwy}. Our proof is
different.

\begin{prop} Let $A,B$ be positive definite matrices and let
$\varphi:{\Bbb R}\to {\Bbb P}$ be the function
$$\varphi(t)=A^{\frac{1-t}{2t}}BA^{\frac{1-t}{2t}}.$$
Let $F:{\Bbb R}_{+}\to {\Bbb R}_{+}$ be the function
$$F(t)={\mathrm{tr}} \varphi(t)^{t}=F_{t}(A,B).$$
Then
\begin{eqnarray}\label{E:dev}
F'(t)={\mathrm{tr}}\left[\varphi(t)^{t}\left(\log
\varphi(t)-\frac{1}{t}\log A\right)\right].
\end{eqnarray}
In particular,
\begin{eqnarray}\label{E:dev1}
F'(1)={\mathrm{tr}}\left[B(\log B-\log A)\right].
\end{eqnarray}
\end{prop}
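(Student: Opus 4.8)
The plan is to differentiate $F(t)=\operatorname{tr}\varphi(t)^t$ directly using the chain rule, treating the $t$-dependence in the exponent and in $\varphi(t)$ separately. Write $F(t)=\operatorname{tr} G(t,s)\big|_{s=t}$ where $G(t,s)=\varphi(t)^s$, so that $F'(t)=\operatorname{tr}\partial_t G(t,s)\big|_{s=t}+\operatorname{tr}\partial_s G(t,s)\big|_{s=t}$. The second term is the derivative of a matrix power with respect to its exponent: since $\frac{d}{ds}P^s=\frac{d}{ds}e^{s\log P}=P^s\log P$ for positive definite $P$, this contributes $\operatorname{tr}\bigl(\varphi(t)^t\log\varphi(t)\bigr)$. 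The first term involves $\frac{d}{dt}\varphi(t)^t$ with the exponent frozen; here I would use the fact that under the trace, $\operatorname{tr}\frac{d}{dt}\psi(t)^t = t\operatorname{tr}\bigl(\psi(t)^{t-1}\psi'(t)\bigr)$, which follows from Lemma \ref{L:diff} (with $f(x)=x^t$, so $f'(x)=tx^{t-1}$) combined with the chain rule, exactly as in the proof of Proposition \ref{P:Der}. The key simplification that makes the cyclicity of the trace work here is that $\psi(t)^{t-1}$ commutes with $\psi(t)$.

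Next I would compute $\varphi'(t)$. Writing $\varphi(t)=A^{g(t)}BA^{g(t)}$ with $g(t)=\frac{1-t}{2t}=\frac{1}{2t}-\frac12$, we have $g'(t)=-\frac{1}{2t^2}$, and
\begin{eqnarray*}
\varphi'(t)=g'(t)\bigl[(\log A)A^{g(t)}BA^{g(t)}+A^{g(t)}BA^{g(t)}(\log A)\bigr]=g'(t)\bigl[(\log A)\varphi(t)+\varphi(t)(\log A)\bigr].
\end{eqnarray*}
Then $\operatorname{tr}\bigl(\varphi(t)^{t-1}\varphi'(t)\bigr)=g'(t)\operatorname{tr}\bigl(\varphi(t)^{t-1}(\log A)\varphi(t)+\varphi(t)^{t-1}\varphi(t)(\log A)\bigr)=2g'(t)\operatorname{tr}\bigl(\varphi(t)^{t}\log A\bigr)$, using cyclicity and the commutation of $\varphi(t)^{t-1}$ with $\varphi(t)$. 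Multiplying by $t$ gives the first term as $2t\,g'(t)\operatorname{tr}\bigl(\varphi(t)^t\log A\bigr)=-\frac{1}{t}\operatorname{tr}\bigl(\varphi(t)^t\log A\bigr)$, since $2t\,g'(t)=-1/t$. Adding the two contributions yields
\begin{eqnarray*}
F'(t)=\operatorname{tr}\bigl(\varphi(t)^t\log\varphi(t)\bigr)-\tfrac{1}{t}\operatorname{tr}\bigl(\varphi(t)^t\log A\bigr)=\operatorname{tr}\Bigl[\varphi(t)^t\bigl(\log\varphi(t)-\tfrac1t\log A\bigr)\Bigr],
\end{eqnarray*}
which is \eqref{E:dev}. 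For \eqref{E:dev1}, set $t=1$: then $\varphi(1)=A^{0}BA^{0}=B$, so $\varphi(1)^1=B$ and $\log\varphi(1)=\log B$, giving $F'(1)=\operatorname{tr}\bigl[B(\log B-\log A)\bigr]$.

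The main obstacle is justifying the split $F'(t)=\operatorname{tr}\partial_t G+\operatorname{tr}\partial_s G$ rigorously, i.e. that $(t,s)\mapsto\varphi(t)^s$ is jointly differentiable so the multivariable chain rule applies — this is fine because $\varphi(t)$ is a smooth $\mathbb P$-valued function and $s\mapsto P^s$ is smooth in $(P,s)$ on $\mathbb P\times\mathbb R$ — and, more subtly, the interchange of trace with derivative that lets me replace $\frac{d}{dt}\operatorname{tr}\varphi(t)^t$ (exponent frozen) by $t\operatorname{tr}(\varphi(t)^{t-1}\varphi'(t))$ rather than dealing with the full Daleckii–Krein Loewner-matrix expression. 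That interchange is exactly the content of Lemma \ref{L:diff}: because we hit everything with a trace, the Hadamard product against the Loewner matrix collapses to $\operatorname{tr}(f'(\varphi)\varphi')$, and here $f'(x)=tx^{t-1}$ with $f'(\varphi)=t\varphi^{t-1}$ commuting with $\varphi$, so no ordering issues survive. Once that reduction is in hand, the remainder is the short computation of $\varphi'(t)$ and the algebraic identity $2t\,g'(t)=-1/t$ above.
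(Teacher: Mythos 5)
Your proof is correct and is essentially the paper's computation in different packaging: the paper writes $F(t)={\mathrm{tr}}\, e^{h(t)}$ with $h(t)=t\log\varphi(t)$ and uses $F'={\mathrm{tr}}(e^{h}h')$, while you split the two sources of $t$-dependence via $G(t,s)=\varphi(t)^{s}$; both routes produce the same two terms from the same $\varphi'(t)=-\tfrac{1}{2t^{2}}\bigl((\log A)\varphi(t)+\varphi(t)\log A\bigr)$ and the same trace-collapse of the Daleckii--Krein formula. If anything, your version makes slightly more explicit the point that the identity $\tfrac{d}{dt}\log\varphi(t)=\varphi(t)^{-1}\varphi'(t)$ is only being used under the trace.
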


\begin{proof}
We have
$$\varphi(t)=A^{\frac{1}{2t}}\left(A^{-\frac{1}{2}}BA^{-\frac{1}{2}}\right)A^{\frac{1}{2t}}.$$
Differentiation gives
$$\varphi'(t)=-\frac{1}{2t^{2}}\left((\log
A)\varphi(t)+\varphi(t)\log A\right).$$ Let $h:{\Bbb R}_{+}\to {\Bbb
H}$ be the map $h(t)=t\log \varphi(t).$ Then
\begin{eqnarray*}
h'(t)&=& \log \varphi(t)+t\varphi(t)^{-1}\varphi'(t)\\
&=&\log \varphi(t)-\frac{1}{2t}\varphi(t)^{-1}\left[(\log
A)\varphi(t)+\varphi(t)\log A\right].
\end{eqnarray*}
Our function $F(t)={\mathrm{tr}} e^{h(t)}.$ Hence
\begin{eqnarray*}
F'(t)&=&{\mathrm{tr}}\left(e^{h(t)}h'(t)\right)\\
&=&{\mathrm{tr}}\left(\varphi(t)\log\varphi(t)\right)-
\frac{1}{2t}{\mathrm{tr}}\left[\varphi(t)^{t-1}\left((\log
A)\varphi(t)+\varphi(t)\log
A\right)\right]\\
&=&{\mathrm{tr}}\left(\varphi(t)^{t}\log
\varphi(t)\right)-\frac{1}{t}{\mathrm{tr}}\left(\varphi(t)^{t}\log
A\right).
\end{eqnarray*}
This proves (\ref{E:dev}).
\end{proof}

\vspace{4mm}

\vspace{4mm} Using L'Hopital's rule and (\ref{E:dev1}) we obtain the
relation (\ref{E:RE}).

\section{Higher derivatives and strong convexity}
We now turn to the proof of Theorem \ref{T:main}. For $0<t<1,$ let
$\mu$ be the measure on $(0,\infty)$ defined by
$$d\mu(\lambda)=\frac{\sin t\pi}{\pi}\lambda^{t-1}d\lambda.$$
Then for all $x>0$ we have
\begin{eqnarray}\label{E:intre}
x^{t-1}=\int_{0}^{\infty}\frac{1}{\lambda+x}d\mu(\lambda).
\end{eqnarray}
See (V.4) in \cite{rbh}. Differentiating both sides with respect to
$x,$ we obtain
\begin{eqnarray}\label{E:intrep}
(1-t)x^{t-2}=\int_{0}^{\infty}\frac{1}{(\lambda+x)^{2}}d\mu(\lambda).
\end{eqnarray}
Let $h:{\Bbb P}\to {\Bbb R}$ be the function
$$h(X)=-\frac{1}{t}{\mathrm{tr}} X^{t}.$$ By Lemma \ref{L:diff}, the
derivative of $h$ is given by
\begin{eqnarray}
Dh(X)(Y)=-{\mathrm{tr}}X^{t-1}Y.
\end{eqnarray}
Let $g(X)=X^{t-1}.$ Then the second derivative $D^2h(X)$ is  the
symmetric bilinear function
$$D^2h(X)(Y,Z)=-{\mathrm{tr}}\left(Dg(X)(Z)\right)Y.$$
Using the integral representation (\ref{E:intre}) we see that
$$Dg(X)(Z)=-\int_{0}^{\infty}(\lambda+X)^{-1}Z(\lambda+X)^{-1}d\mu(\lambda),$$
and hence
\begin{eqnarray}\label{E:Hess}
D^{2}h(X)(Y,Z)={\mathrm{tr}}\int_{0}^{\infty}(\lambda+X)^{-1}Z(\lambda+X)^{-1}Y\
 d\mu(\lambda).
\end{eqnarray}
In the notation of gradients
$$D^{2}h(X)(Y,Z)=\langle \nabla^{2}h(X)(Y),Z\rangle.$$
So, we can write (\ref{E:Hess}) also as
\begin{eqnarray}
\nabla^{2}h(X)(Y)=\int_{0}^{\infty}(\lambda+X)^{-1}Y(\lambda+X)^{-1}\
 d\mu(\lambda).
\end{eqnarray}
In passing, we note that this shows $\nabla^{2}h(X)$ is a completely
positive linear map on the space ${\Bbb H}$ of Hermitian matrices.

Now let $A$ be any positive matrix and let
\begin{eqnarray}
{\tilde
h}(X)=h(A^{\frac{1}{2}}XA^{\frac{1}{2}})=-\frac{1}{t}{\mathrm{tr}}\left(A^{\frac{1}{2}}XA^{\frac{1}{2}}\right)^{t}.
\end{eqnarray}
Then
$$D{\tilde
h}(X)(Y)=Dh(A^{\frac{1}{2}}XA^{\frac{1}{2}})(A^{\frac{1}{2}}YA^{\frac{1}{2}})$$
and
$$D^{2}{\tilde
h}(X)(Y,Z)=D^{2}h(A^{\frac{1}{2}}XA^{\frac{1}{2}})(A^{\frac{1}{2}}YA^{\frac{1}{2}},A^{\frac{1}{2}}ZA^{\frac{1}{2}}).$$
Hence, from (\ref{E:Hess})
$$
D^2{\tilde
h}(X)(Y,Z)={\mathrm{tr}}\int_{0}^{\infty}\left(\lambda+A^{\frac{1}{2}}XA^{\frac{1}{2}}\right)^{-1}
A^{\frac{1}{2}}ZA^{\frac{1}{2}}\left(\lambda+A^{\frac{1}{2}}XA^{\frac{1}{2}}\right)^{-1}A^{\frac{1}{2}}YA^{\frac{1}{2}}d\mu(\lambda).$$
Using the identity
$$\left(\lambda+A^{\frac{1}{2}}XA^{\frac{1}{2}}\right)^{-1}=\left(A^{1/2}(\lambda A^{-1}+X)A^{1/2}\right)^{-1}=A^{-\frac{1}{2}}\left(\lambda
A^{-1}+X\right)^{-1}A^{-\frac{1}{2}},$$ we obtain
$$D^{2}{\tilde
h}(X)(Y,Z)={\mathrm{tr}}\int_{0}^{\infty}\left(\lambda
A^{-1}+X\right)^{-1}Y\left(\lambda A^{-1}+X\right)^{-1}Z\, d\mu(\lambda).$$
In other words
\begin{eqnarray}\label{E:exp}
\nabla^{2}{\tilde h}(X)(Y)=\int_{0}^{\infty}\left(\lambda
A^{-1}+X\right)^{-1}Y\left(\lambda A^{-1}+X\right)^{-1}
d\mu(\lambda).
\end{eqnarray}
Let $C_{\lambda}=\left(\lambda A^{-1}+X\right)^{-1}$ and let
$\Gamma_{C_{\lambda}}$ be the map on the space of matrices defined
as $\Gamma_{C_{\lambda}}(Y)=C_{\lambda}YC_{\lambda}.$ The
eigenvalues of $\Gamma_{C_{\lambda}}$ are the products of the
eigenvalues of $C_{\lambda}.$ The expression (\ref{E:exp}) can be
rewritten as
$$\nabla^{2}{\tilde
h}(X)(Y)=\int_{0}^{\infty}\Gamma_{C_{\lambda}}(Y) d\mu(\lambda).$$
By the extremal principle for eigenvalues
$$\frac{\langle \Gamma_{C_{\lambda}}(Y),Y\rangle}{\langle
Y,Y\rangle}\geq
\lambda_{\min}(\Gamma_{C_{\lambda}})=\lambda_{\min}(C_{\lambda})^{2}.$$
Now let $\alpha$ and $\beta$ be positive reals with $\alpha\leq
\beta$ and suppose that $\alpha I\leq X\leq \beta I.$ Then for all
$A\in {\Bbb P}$
$$\left(\frac{\lambda}{\lambda_{\min}(A)}+\beta\right)^{-1}\leq
C_{\lambda}\leq
\left(\frac{\lambda}{\lambda_{\max}(A)}+\alpha\right)^{-1}.$$ Using
the last three relations above, we get
\begin{eqnarray*}
\frac{\langle\nabla^{2}{\tilde h}(X)(Y),Y\rangle}{\langle
Y,Y\rangle}&\geq&\int_{0}^{\infty}
\left(\frac{\lambda}{\lambda_{\min}(A)}+\beta\right)^{-2}\
d\mu(\lambda)\\
&=&\lambda_{\min}(A)^{2}\int_{0}^{\infty}\frac{1}{(\lambda+\beta\lambda_{\min}(A))^{2}}\
d\mu(\lambda)\\
&=&(1-t)\beta^{t-2}\lambda_{\min}(A)^{t},
\end{eqnarray*}
the last equality being a consequence of (\ref{E:intrep}). This
shows that \begin{eqnarray}\label{E:SE} \nabla^2{\tilde h}(X)\geq
(1-t)\beta^{t-2}\lambda_{\min}(A)^{t},
\end{eqnarray}
for all $A\in {\Bbb P}$ and $\alpha I\leq X\leq \beta I.$

Finally, let $f$ be the function defined by (\ref{E:mainf}). Then
$-f$ is the function obtained from ${\tilde h}$ by multiplying it by
$t$ and replacing $A$ by $A^{\frac{1-t}{t}}.$ Hence, (\ref{E:SE})
leads to the inequality \begin{eqnarray}\label{ineq}
-\nabla^{2}f(X)\geq t(1-t)\beta^{t-2}\lambda_{\min}(A)^{1-t},
\end{eqnarray}
for all $\alpha I\leq X\leq \beta I.$ So, if we assume
$\lambda_{\min}(A)\geq \alpha,$ then we obtain the first inequality
in (\ref{E:convexity}).

The second inequality in (\ref{E:convexity}) has an analogous proof.\hfill\qed

\vspace{4mm}

Our method can be used to calculate higher derivatives of any order,
and to estimate their norms. For example, we can show that
$$\Vert\nabla^{3}f(X)\Vert\leq t(1-t)(2-t)\beta^{1-t}\alpha^{t-3},$$
from which it follows that
$$\Vert\nabla^{2}f(X)-\nabla^{2}f(Y)\Vert_{2}\leq
t(1-t)(2-t)\beta^{1-t}\alpha^{t-3}\Vert X-Y\Vert_{2}.$$

\section{Gradient
Projection Algorithm} Let $A_1,\ldots,A_m\in\mathbb{P}.$ For $0<t<1$ define the function $\varphi_t$ on $\mathbb{P}$ as
$$\varphi_t(X)=\sum_{j=1}^{m}w_{j}\left[{\mathrm{tr}}((1-t)A_{j}+tX)
-{\mathrm{tr}}\left(A_{j}^{\frac{1-t}{2t}}XA_{j}^{\frac{1-t}{2t}}\right)^{t}\right]$$
We consider the  optimization problem
\begin{eqnarray}\label{E:miniz}
{\underset{X\in\mathbb{P}}{\min}}\ \varphi_t(X)
\end{eqnarray}
on the convex cone $\mathbb{P}.$
The multimarginal optimal transport problem of Gaussian measures (\cite{ac,gm,gs}) is
the special case $t=1/2.$ Let $\alpha$ and $\beta$ be positive
numbers such that
 $$\alpha I\leq
A_{j}\leq \beta I, \ \ \ j=1,\dots,m.$$ We note that the optimal
values of $\alpha$ and $\beta$ are
$$\underset{1\leq j\leq m}{\min}\,\,\,\lambda_{\min}(A_{j})\ \ \ {\mathrm{and}} \ \ \ \underset{1\leq j\leq m}{\max}\,\,\,\lambda_{\max}(A_{j})$$
respectively. By the results obtained in the previous section, we see
that $\varphi_{t}$ is $t(1-t)\beta^{1-t}\alpha^{t-2}$-smooth and
$t(1-t)\beta^{t-2}\alpha^{1-t}$-strongly convex, and the condition
number of $\nabla^2\varphi_{t}(X)$ is bounded by
$\left(\frac{\beta}{\alpha}\right)^{3-2t}.$ By Proposition
\ref{P:Der} $\varphi_t$ is strictly convex with
\begin{eqnarray*}
D\varphi_t(X)(Y)&=&t\ \sum_{j=1}^{m}w_{j}
{\mathrm{tr}}\left[I-(A_{j}^{\frac{1-t}{t}}\#_{1-t}X^{-1})\right]Y\\
&=&t\ {\mathrm{tr}}\left[I-\sum_{j=1}^{m}w_{j}
(A_{j}^{\frac{1-t}{t}}\#_{1-t}X^{-1})\right]Y.
\end{eqnarray*}
In terms of the gradient
$$\nabla \varphi_t(X)=t\ \left[I-\sum_{j=1}^{m}w_{j}
(A_{j}^{\frac{1-t}{t}}\#_{1-t}X^{-1})\right].$$

 To prove the
existence and uniqueness of the minimization problem (\ref{E:miniz}), it
is enough to show that the equation $\nabla \varphi_t(X)=0$ has a
positive definite solution. This is equivalent to the nonlinear
matrix equation
$$X=\sum_{j=1}^{m}w_{j}X^{1/2}
\left(X^{-1}\#_{t}A_{j}^{\frac{1-t}{t}}\right)X^{1/2}=\sum_{j=1}^{m}w_{j}
\left(X^{1/2}A_{j}^{\frac{1-t}{t}}X^{1/2}\right)^{t}.$$ Let
$F:{\mathbb P}\to {\mathbb P}$ be the map defined by
$$F(X)=\sum_{j=1}^{m}w_{j}
\left(X^{1/2}A_{j}^{\frac{1-t}{t}}X^{1/2}\right)^{t}.$$
If all $A_j,$ $1\le j\le m,$ and $X$ are bounded from below by $\alpha I$ and from above by $\beta I,$
then
\begin{eqnarray*}
X^{1/2}A_{j}^{\frac{1-t}{t}}X^{1/2}&\leq&
X^{1/2}(\beta^{\frac{1-t}{t}}I)X^{1/2}\leq \beta^{\frac{1-t}{t}}
X\leq \beta^{\frac{1-t}{t}}\beta I\leq \beta^{1/t}I
\end{eqnarray*}
and hence $F(X)\leq \sum_{j=1}^{m}w_{j}\beta I=\beta I.$ Similarly
$F(X)\geq \alpha I.$ This shows that $F$ is a self-map on the
compact and convex interval $[\alpha I, \beta I]:=\{X>0: \alpha
I\leq X\leq \beta\}.$ By Brouwer's fixed point theorem, $F$ has a
fixed point. This settles the problem of  existence and uniqueness
of the minimizer in \eqref{E:miniz}.

Now we apply the classical gradient projection method for
(constrained) strongly convex functions. Let
\begin{eqnarray*}
X_{k+1}&=&\left[X_{k}-\eta \nabla f(X_{k})\right]_{+}\\
&=&\left[X_{k}-t\eta
I+t\eta\sum_{j=1}^{n}w_{j}\left(A_{j}^{\frac{1-t}{t}}\#_{1-t}X_{k}^{-1}\right)\right]_{+}
\end{eqnarray*}
where $X_{0}\in [\alpha I, \beta I]$ and $ [\cdot]_{+}$ denotes the
projection to $[\alpha I, \beta I]$ and
$0<\eta<\frac{2}{\beta_{*}}.$ Since
 $\varphi_{t}$ is
$\beta_{*}:=t(1-t)\beta^{1-t}\alpha^{t-2}$-smooth and
$\alpha_{*}:=t(1-t)\beta^{t-2}\alpha^{1-t}$-strongly convex, the
iteration  converges to the unique minimizer $X_{*}$ with linear
convergence rate
\begin{eqnarray}\label{E:cr}\Vert X_{k+1}-X_{*}\Vert_{2}\leq
q^{k}\Vert X_{k}-X_{*}\Vert_{2} \end{eqnarray} where
$$q=\max\{|1-\eta\alpha_{*}|, |1-\eta\beta_{*}|\}.$$
Or, with $\eta=1/\beta_{*},$
$$\Vert X_{k+1}-X_{*}\Vert^{2}_{2}\leq e^{-\frac{k\alpha_{*}}{\beta_{*}}}\Vert X_{1}-X_{*}\Vert_{2}
=e^{-k\left(\alpha/\beta\right)^{3-2t}}\Vert
X_{1}-X_{*}\Vert^2_{2}.$$ See (Theorem 3.10, \cite{bu}). A
gradient-based optimization method with sublinear convergence for
$t=1/2$ has recently appeared in \cite{kly}.

\section{Appendix}

Some of the inequalities in \eqref{E:Tracein} have much stronger
versions, and these are related to recurring themes in matrix
analysis and mathematical physics.  See e.g.,
\cite{an,rbh,rbh1,bg,c,lt}.
 \vskip.1in Let $x,y$ be two $n$-vectors
with nonnegative components. Let $x_1^\downarrow\ge\cdots\ge
x_n^\downarrow$ be the components of $x$ arranged in decreasing
order. We say that $x$ is {\it weakly log majorised} by $y,$ in
symbols $x\prec_{\textrm{wlog}} y,$ if for $1\le k\le n$
\begin{equation}
\prod\limits_{j=1}^{k}x_j^\downarrow\le\prod\limits_{j=1}^{k}y_j^\downarrow.\label{eqap1}
\end{equation}

If in addition
$$\prod\limits_{j=1}^{n}x_j^\downarrow=\prod\limits_{j=1}^{n}y_j^\downarrow,$$
then we say that $x$ is {\it log majorised} by $y,$
and write this as $x\prec_{\log}y.$
We write $x\le y$ if $x_j^\downarrow\le y_j^\downarrow$ for all $j=1,\ldots,n.$
\vskip.1in
Let $X$ be any $n\times n$ matrix and let
$\lambda(X)=(\lambda_1(X),\ldots,\lambda_n(X))$ and $s(X)=(s_1(X),\ldots,s_n(X))$ be the $n$-tuples
whose components are the eigenvalues and the singular values of $X,$ respectively.
A famous inequality of H. Weyl says that
\begin{equation}
(|\lambda_1(X)|,\ldots,|\lambda_n(X)|)\prec_{\log}s(X).\label{eqap2}
\end{equation}
(See \cite{rbh} p. 43.)
\vskip.1in
Now let $A$ and $B$ be positive definite matrices and let $0< t< 1.$
It has been shown in \cite{bg} that
\begin{equation}
\lambda(A\#_tB)\prec_{\log}\lambda(A^{1-t}B^t).\label{eqap3}
\end{equation}
A matrix version of Young's inequality proved by T. Ando \cite{an} says that
\begin{equation}
s(A^{1-t}B^t)\le \lambda((1-t)A+tB).\label{eqap4}
\end{equation}
Combining \eqref{eqap3} and \eqref{eqap4}
we have the chain
\begin{eqnarray}
\lambda(A\#_tB) &\prec_{\log}& \lambda(A^{1-t}B^t)\prec_{\log}s(A^{1-t}B^t)\nonumber\\
&\le & \lambda((1-t)A+tB).\label{eqap5}
\end{eqnarray}
The inequality \eqref{E:Tracein} raises the intriguing question of how the eigenvalue tuple
$\lambda\left(A^{\frac{1-t}{2t}}BA^{\frac{1-t}{2t}}\right)^t$ fits into this chain.
To answer this we recall the Araki-Lieb-Thirring inequalities
which say that if $X$ and $Y$ are positive definite matrices,
then
\begin{equation}
\lambda(X^tY^tX^t)\prec_{\log}\lambda(XYX)^t,\textrm{ for }0\le t\le 1,\label{eqap6}
\end{equation}
and
\begin{equation}
\lambda(XYX)^t\prec_{\log}\lambda(X^tY^tX^t),\textrm{ for }t\ge 1.\label{eqap7}
\end{equation}
See the proof of Theorem IX.2.10 in \cite{rbh}.
Using the first of these inequalities,
we see that for $0\le t\le 1,$
\begin{equation}
\lambda(A^{1-t}B^t)=\lambda\left(A^{\frac{1-t}{2}}B^tA^{\frac{1-t}{2}}\right)\prec_{\log}\lambda\left(A^{\frac{1-t}{2t}}BA^{\frac{1-t}{2t}}\right)^t.\label{eqap8}
\end{equation}

Now suppose $\frac{1}{2}\le t\le 1.$
Then from \eqref{eqap7} we obtain
$$\lambda\left(A^{\frac{1-t}{2t}}BA^{\frac{1-t}{2t}}\right)^{2t}\prec_{\log}\lambda\left(A^{1-t}B^{2t}A^{1-t}\right).$$
Taking square roots of both sides, we get
\begin{equation}
\lambda\left(A^{\frac{1-t}{2t}}BA^{\frac{1-t}{2t}}\right)^t\prec_{\log} s(A^{1-t}B^t).\label{eqap9}
\end{equation}
Combining \eqref{eqap5}, \eqref{eqap8} and \eqref{eqap9} we have
\begin{eqnarray}
\lambda(A\#_tB) &\prec_{\log}& \lambda(A^{1-t}B^t)\nonumber\\
&\prec_{\log}& \lambda\left(A^{\frac{1-t}{2t}}BA^{\frac{1-t}{2t}}\right)^t\nonumber\\
&\prec_{\log}& s(A^{1-t}B^t)\nonumber\\
&\le & \lambda((1-t)A+tB),\label{eqap10}
\end{eqnarray}
for $\frac{1}{2}\le t\le 1.$

On the other hand if $0\le t\le \frac{1}{2},$
then from \eqref{eqap6} we obtain
$$\lambda\left(A^{1-t}B^{2t}A^{1-t}\right)\prec_{\log}\lambda\left(A^{\frac{1-t}{2t}}BA^{\frac{1-t}{2t}}\right)^{2t}.$$
Taking square roots of both sides we get
$$s(A^{1-t}B^t)\prec_{\log}\lambda\left(A^{\frac{1-t}{2t}}BA^{\frac{1-t}{2t}}\right)^t.$$
So for $0\le t\le\frac{1}{2},$ we have
\begin{eqnarray}
\lambda(A\#_tB) &\prec_{\log}& \lambda(A^{1-t}B^t)\nonumber\\
&\prec_{\log}& s(A^{1-t}B^t)\nonumber\\
&\prec_{\log}& \lambda\left(A^{\frac{1-t}{2t}}BA^{\frac{1-t}{2t}}\right)^t.\label{eqap11}
\end{eqnarray}
To complete this chain in the same way as \eqref{eqap10} it remains to
answer whether for $0\le t\le \frac{1}{2},$
$\lambda\left(A^{\frac{1-t}{2t}}BA^{\frac{1-t}{2t}}\right)^t$ is dominated by $\lambda((1-t)A+tB).$

 \vskip.2in \noindent{\it Acknowledgement.} We thank the anonymous referee and Professor Fumio Hiai
  for a careful reading of the manuscript. The work of R.
Bhatia is supported by a J. C. Bose National Fellowship and of Y.
Lim is supported by the National Research Foundation
 of Korea (NRF) grant founded by the Korea government (MEST) (No. 2015R1A3A2031159) and 2016R1A5A1008055.

\vskip.3in

\vskip.2in


\end{document}